\documentclass{paper}

\usepackage{arxiv}

\usepackage[utf8]{inputenc} 
\usepackage[T1]{fontenc}    
\usepackage{amsmath,amsthm}
\usepackage{hyperref}       
\usepackage{array}                    
\usepackage{amsfonts}       
\usepackage{nicefrac}       
\usepackage{mathrsfs}
\numberwithin{equation}{section}
\bibliographystyle{abbrv}

\newtheorem{thm}{Theorem}[section]
\newtheorem{prop}[thm]{Proposition}
\newtheorem{lem}[thm]{Lemma}
\newtheorem{cor}[thm]{Corollary}
\theoremstyle{definition}
\newtheorem{dfn}[thm]{Definition}
\newtheorem{rem}[thm]{Remark}
\newtheorem*{rem*}{Remark}
\newtheorem{exmp}[thm]{Example}
\newtheorem*{exmp*}{Example}

\newcommand{\K}{\mathbb{K}}                             
\newcommand{\N}{\mathbb{N}}                             
\newcommand{\R}{\mathbb{R}}                             
\newcommand{\Rbar}{\overline{\R}}												
\newcommand{\mcN}{\mathcal{N}}

\newcommand{\rkl}[1]{\left( #1 \right)}									
\newcommand{\ekl}[1]{\left[ #1 \right]}									
\newcommand{\gkl}[1]{\left\{ #1 \right\}}								
\newcommand{\spr}[1]{\left\langle #1 \right\rangle}			
\newcommand{\norm}[1]{\left\|{#1}\right\|}							
\newcommand{\abs}[1]{\left|{#1}\right|}									

\DeclareMathOperator{\dom}{dom}                         
\DeclareMathOperator{\hyph}{-}													


\title{A note on $\Gamma$-convergence of Tikhonov functionals \\for nonlinear inverse problems}

\author{
  Alexey Belenkin \\
  Department of Numerical Mathematics\\
  Saarland University\\
  Saarbr\"ucken, Germany \\
  \texttt{s9albele@stud.uni-saarland.de} \\
       \And
  Michael Hartz \\
  Department of Mathematics\\
  Saarland University\\
  Saarbr\"ucken, Germany \\
  \texttt{hartz@math.uni-sb.de} \\
       \And
  Thomas Schuster \\
  Department of Numerical Mathematics\\
  Saarland University\\
  Saarbr\"ucken, Germany \\
  \texttt{thomas.schuster@num.uni-sb.de} \\
}

\begin{document}
\maketitle

\begin{abstract}
We consider variational regularization of nonlinear inverse problems in Banach spaces using Tikhonov functionals. This article addresses the problem of $\Gamma$-convergence of a family of Tikhonov functionals and assertions of the convergence of their respective infima. Such questions arise, if model uncertainties, inaccurate forward operators, finite dimensional approximations of the forward solutions and / or data, etc. make the evaluation of the original functional impossible and, thus, its minimizer not computable. But for applications it is of utmost importance that the minimizer of the replacement functional approximates the original minimizer. Under certain additional conditions this is satisfied if the approximated functionals converge to the original functional in the sense of $\Gamma$-convergence. We deduce simple criteria in different topologies which guarantee $\Gamma$-convergence as well as convergence of minimizing sequences. 
\end{abstract}

\keywords{Gamma-convergence \and convergence of minimizing sequences \and variational regularization \and Tikhonov functional \and equi-mild coercivity}

\smallskip

\textbf{MSC 2010:} 58E50, 65J22

\smallskip

\section{Introduction}\label{sec:intro}

Nonlinear inverse problems arise in a large variety of applications ranging from medicine, natural science and engineering. Such problems are usually modeled as operator equations
\begin{equation} \label{Fxy}
	F(x)=y
\end{equation}
for a continuous, nonlinear mapping $F:\dom(F)\subseteq X\to Y$, where $\dom(F)$ denotes the domain of $F$ and $X$, $Y$ are Banach spaces. Problems like \eqref{Fxy} usually are ill-posed in the sense that they are not continuously invertible. The stable solution of such equations by regularization methods has been widely studied over the last decades and published in textbooks such as 
\cite{engl, groetsch1984theory, louisbook, RIEDER:03, schuster2012regularization}. An important class of regularization methods relies on the minimization of \emph{Tikhonov functionals}
\begin{equation} \label{eq_Tikhonov}
	T(x) := \frac{1}{p}\norm{F(x)-y^\delta}_Y^p + \alpha \, \Omega(x)
\end{equation}
in $\dom(F)$, which is well-known as \emph{variational regularization}. Here $y^\delta\in Y$ denotes noise-contaminated data with noise level $\delta>0$, i.e., $\norm{y^\delta - y}_Y\leq \delta$, $\Omega:\dom(F)\to \R_+$ is a functional acting as penalty term and $\alpha>0$ serves as regularization parameter and balances the weighting of the data fitting term $\norm{F(x)-y^\delta}_Y^p$ and the penalty term $\Omega(x)$. The existence of minimizers, the stability and regularization property of $T$ and different parameter choice rules for $\alpha$ depending on $\delta$ and / or $y^\delta$ have been studied in various settings and are very well understood.

In practical applications and implementations of variational regularization methods however $T(x)$ as well as its minimizer(s) are barely accessible, since approximations in finite dimensional subspaces of $X$, $Y$, respectively, e.g. by discretization, are necessary or the forward problem $F(x)$ is too expensive or just impossible to be evaluated exactly. This is, e.g., the case if only a viscosity or weak solution of a corresponding PDE are well defined rather than a classical one (see, e.g., \cite{VIERUS;SCHUSTER:22} for an application in tensor tomography). A further problem of computing $T(x)$ or $F(x)$ exactly are model uncertainties that have to be taken into account (see \cite{RIEDER:03}). That means, instead of computing a minimizer of $T$ in applications one rather computes a minimizer of
\begin{equation} \label{Tikhonov_delta}
	T_\delta(x) := \frac{1}{p} \norm{F_{\delta}(x)-y^\delta}_Y^p + \alpha(\delta) \, \Omega(x)
\end{equation}
for some $\delta>0$ including noise as well as model inaccuracies $F_\delta$, which are the result of processes mentioned above. In some situations it might be more convenient to write $T_n$ instead of $T_\delta$, e.g. to model a discrete approximation process. Standard theory in inverse problems tries to find parameter choice rules $\alpha(\delta)$ such that a minimizer $x_\delta$ of $T_\delta$, if existent, converges to a solution $x^\dagger$ of \eqref{Fxy}. Articles such as \cite{PLATO;VAINIKKO:90, poschl2010discretization} investigate the combination of regularization and discretization. In general the analysis for such convergence results demands for specific asymptotic behaviors of the noise level, the model or discretization error, or assumptions on the nonlinearity of the problem such as the tangential cone condition, all of which are difficult or impossible to check for specific applications. In this article we rather address the question under which conditions we can guarantee that a minimizer $x_\delta$ is \emph{close} to a minimizer of $T$ in the sense that
\begin{equation} \label{inf_property}
	\min_{x\in\dom(F)} T(x) = \lim_{\delta\to 0} \inf_{x\in\dom(F)} T_\delta(x) \; ,   
\end{equation}
where we replace $\delta\to 0$ by $n \to \infty$ if we want to address a discrete process. Such assertions can be achieved by proving that $T_\delta \to T$ as $\delta\to 0$ ($T_n\to T$ as $n\to \infty$) in the sense of \emph{$\mathit{\Gamma}$-convergence}. So we are more interested in the question, whether $x_\delta$ is a good replacement for a (potentially) not accessible minimizer of $T$ rather than convergence to an exact solution of \eqref{Fxy}, a question which is of large interest from a practical point of view. For this purpose $\Gamma$-convergence seems to be superior to pointwise or uniform convergence of $T_\delta$ to $T$.

The concept of $\Gamma$-convergence was introduced by Ennio De Giorgi in the 1970s and can be seen as a generalization of G-convergence, a notion of convergence for Green functions. $\Gamma$-convergence is often used for homogenization problems, dimension reduction, and transitions from discrete to continuous models because it helps to capture the relevant behavior of solutions of a family of minimum problems 
\[
  \min\gkl{T_\varepsilon(u);\, u\in X_\varepsilon}
\]
in a problem 
\[
  \min\gkl{T(u);\, u\in X}
\]
that does not depend on the parameter $\varepsilon$ and typically presents a limiting case for the parameter $\varepsilon$. Standard references on $\Gamma$-convergence are the textbooks \cite{Braides2005, DalMaso1993}. The following properties of $\Gamma$-convergence make this idea especially intriguing (see \cite{Braides2005}): 
\begin{itemize}
  \item the limit functional is always lower semicontinuous; 
	\item the convergence is stable under additive continuous perturbations; 
	\item under some suitable conditions on the the family $\gkl{T_\varepsilon}$ like, e.g., equi-coercivity, also the convergence of their minimum problems is guaranteed, that is
    \begin{itemize}
      \item the limit functional has a minimum point,
      \item the infima of $\gkl{T_\varepsilon}$ converge to the minimum value of $T$, and
      \item the minimizers of $\gkl{T_\varepsilon}$ converge (up to subsequences) to a minimizer of $T$.
    \end{itemize}
\end{itemize}
The last point can be understood as an extension of the following result from calculus of variations to families of functionals: A coercive and lower semicontinuous functional attains its infimum and has a minimum point. A very prominent example, where $\Gamma$-convergence arises in image processing, is the approximation of the Mumford-Shah-functional by the Ambrosio-Tortorelli-functional, see \cite{AMBROSIO1990TORTORELLI}. In this article a fairly general setting is considered, existing results are more specific. For weak-to-weak sequentially continuous, exact forward operators similar results were achieved in \cite{schuster2012regularization}, but not in the setting of $\Gamma$-convergence. In \cite{burger2021variational} $\Gamma$-convergence with respect to the weak$^*$ topology was mentioned for exact bounded linear operators. The outcome in this work goes beyond these results.

\subsubsection*{Outlook and Main Results}

In Section \ref{sec:Preliminaries}, we give a short introduction to the concept of $\Gamma$-convergence. This includes convergence results for minima and minimizers of a sequence of functionals, which are presented in \cite{Braides2005} and \cite{DalMaso1993}. 
We also recall a few basics from the theory of Banach spaces. In Section \ref{sec:GammaConvergenceCoercivenessTikhonovFunctionals}, we define the functionals in question and propose conditions that will allow the theory of $\Gamma$-convergence to be applied to those functionals. We work in a Banach space setting and consider the norm, weak, and weak$^*$ topologies to highlight the differences between the respective conditions in these topologies. 
In Section \ref{sec:Examples} we present examples to demonstrate  how the theoretical results from the previous section apply to typical inverse problems settings like integral equations of first kind and a parameter identification problem for an elliptic boundary value problem. The main achievements of this article are the presentation of criteria for a family of Tikhonov functionals in a fairly general setting to satisfy $\Gamma$-convergence to some limit functional as well as an infimal property as \eqref{inf_property}.


\section{Preliminaries}\label{sec:Preliminaries}
In this section, we give the necessary basics of the theory of $\Gamma$-convergence and also list a few results from functional analysis and topology about compactness in Banach spaces that will be needed later on. 

\subsection{Banach spaces} \label{subsec:CompactnessBanachSpaces}

The Tikhonov functionals we study are defined on (subsets of) Banach spaces. 
In this section, we collect some basic facts from functional analysis that we require. 
For further background, we refer the reader to the books \cite{Conway90,Megginson1998,Rudin91}.

Every Banach space is naturally equipped with the norm topology. 
However, it is known from variational minimization that compactness plays an important role for minimization problems. 
Since norm compact sets are scarce in infinite dimensions, we consider in addition to the norm topology two other topologies on a Banach space $X$. 
These are the weak topology, and, provided that $X$ is a dual space, the weak$^*$ topology.

First, we recall the definition of the weak topology.

\begin{dfn}
Let $X$ be a normed space over $\K = \R$ or $\K = \mathbb{C}$. 
The \emph{(continuous) dual space}  of $X$ is the space
\begin{equation*}
	X^* = \gkl{x^*:X \to \K ; x^* \text{ is linear and continuous}},
\end{equation*}
equipped with the norm
\begin{equation*}
	\norm{x^*} = \sup \gkl{ \abs{x^*(x)} ; x \in X \text{ with } \norm{x} \leq 1 }.
\end{equation*}
The \emph{weak topology} on $X$ is the coarsest topology on $X$ for which all elements of $X^*$ are continuous.
\end{dfn}

The weak topology on $X$ can be understood as the locally convex topology on $X$ induced by the semi-norms
\begin{equation*}
  p_{x^*}(x) = \abs{x^*(x)} \quad (x \in X),
\end{equation*}
where $x^*$ ranges over all elements on $X^*$. 
As usual, topological properties that hold with respect to the weak topology are said to hold \emph{weakly}. 
For more explanation of this construction, we refer the reader to \cite[Chapter 5]{Conway90} or \cite[Chapter 2]{Megginson1998}.

The weak topology is at most coarser than the norm topology, and hence may admit more compact sets. 
This principle works best in reflexive spaces, whose definition we now recall.

\begin{dfn}
Let $X$ be a normed space. The dual space $(X^*)^*$ of $X^*$ is called the \emph{bidual} of $X$ and is denoted by $X^{**}$. The map 
\[
  i:X\to X^{**}, \quad i(x)(x^*)= x^*(x) \quad (x \in X, x^* \in X^*),
\]
is called the \emph{canonical embedding} from $X$ into $X^{**}$. 
A normed space $X$ is \emph{reflexive} if the canonical embedding $i:X\to X^{**}$ is surjective.
\end{dfn}

Examples of reflexive spaces are $L^p$ spaces for $1 < p < \infty$, see, for instance, \cite[Theorem 1.11.10]{Megginson1998}.

The importance of reflexivity for compactness in the weak topology is explained by the following theorem.
\begin{thm} \label{thm:ReflexiveCompact} 
A normed space is reflexive if and only if its closed unit ball is compact in the weak topology.
\end{thm}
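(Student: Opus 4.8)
The plan is to prove this classical fact (often attributed to Kakutani) by reducing it to the two cornerstones of duality theory: the Banach--Alaoglu theorem, asserting that the closed unit ball of any dual space is compact in its weak$^*$ topology, and Goldstine's theorem, asserting that $i(B_X)$ is weak$^*$-dense in $B_{X^{**}}$, where $B_X$ and $B_{X^{**}}$ denote the closed unit balls. Both are available in the references cited in the excerpt (e.g.\ \cite{Megginson1998}), and I would invoke them without reproof. The conceptual bridge linking the two directions is an observation I would record first: the canonical embedding $i \colon X \to X^{**}$ is a homeomorphism from $X$ equipped with the weak topology $\sigma(X, X^*)$ onto $i(X)$ equipped with the subspace topology induced by the weak$^*$ topology $\sigma(X^{**}, X^*)$. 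This is immediate from the definitions, since the weak topology on $X$ is generated by the seminorms $x \mapsto \abs{x^*(x)}$ with $x^* \in X^*$, and under $i$ these are precisely the seminorms $i(x) \mapsto \abs{i(x)(x^*)}$ that generate $\sigma(X^{**}, X^*)$, restricted to $i(X)$. In particular $i$ is weak-to-weak$^*$ continuous.

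For the forward implication, assume $X$ is reflexive, so that $i$ is a bijective isometry onto $X^{**} = (X^*)^*$. By Banach--Alaoglu the ball $B_{X^{**}}$ is weak$^*$-compact; since $i$ is isometric it maps $B_X$ onto $B_{X^{**}}$, and by the homeomorphism observation it transports the weak topology on $B_X$ to the weak$^*$ topology on $B_{X^{**}}$. Hence $B_X$ is weakly compact.

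For the converse, assume $B_X$ is weakly compact. Then $i(B_X)$ is the continuous image of a compact set under the weak-to-weak$^*$ continuous map $i$, hence weak$^*$-compact, hence weak$^*$-closed, inside $B_{X^{**}}$. By Goldstine's theorem $i(B_X)$ is also weak$^*$-dense in $B_{X^{**}}$, so $i(B_X) = B_{X^{**}}$. Since $i$ is linear, scaling gives $i(X) = X^{**}$, i.e.\ $X$ is reflexive (and, as a byproduct, complete, being isometric to the Banach space $X^{**}$).

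The main obstacle does not lie in assembling these three steps but is hidden inside the two cited inputs: Banach--Alaoglu rests on Tychonoff's theorem, and Goldstine's theorem on a Hahn--Banach separation argument carried out in $X^{**}$. Given that the excerpt explicitly refers the reader to standard functional-analysis texts for this background, I would keep the proof at the level above and treat both as known; the one point that genuinely needs care is the precise matching of topologies under $i$, as that is exactly what lets ``weakly compact'' and ``weak$^*$-compact'' be used interchangeably here.
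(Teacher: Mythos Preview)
Your argument is correct and is precisely the classical Kakutani proof via Banach--Alaoglu and Goldstine; the paper itself does not give an independent proof but simply defers to \cite[2.8.2 Theorem]{Megginson1998}, whose argument is the one you have sketched. So your proposal and the paper's approach coincide.
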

\begin{proof}
Cf. \cite[2.8.2 Theorem, p.245]{Megginson1998}.
\end{proof}

We also mention in passing another important result about weak compactness, namely the Eberlein--\v{S}mulian theorem. 
It shows in particular that a subset of a Banach space is weakly compact if and only if it is sequentially weakly compact, see, for instance, \cite[Theorem 2.8.6]{Megginson1998}.

Theorem \ref{thm:ReflexiveCompact} shows that in non-reflexive spaces such as $L^\infty$ spaces, the weak topology is less useful. When working with a dual space, this issue can sometimes be circumvented by using the weak$^*$ topology, whose definition is given as follows.

\begin{dfn}
Let $X$ be a normed space. The weak$^*$ topology on $X^*$ is the coarsest topology on $X^*$ for which the evaluation maps
\begin{equation*}
	X^* \to \K, \quad x^* \mapsto x^*(x),
\end{equation*}
are continuous for all $x \in X$.
\end{dfn}

The weak$^*$ topology on $X^*$ can be understood as the locally convex topology on $X^*$ induced by the semi-norms
\begin{equation*}
  p_{x}(x^*) = \abs{x^*(x)} \quad (x^* \in X^*),
\end{equation*}
where $x$ ranges over all elements on $X$. 
For more explanation of this construction, we once again refer to \cite[Chapter 5]{Conway90} or \cite[Chapter 2]{Megginson1998}.

The crucial result about compactness with respect to the weak$^*$ topology is the following result, which is a generalization of one implication in Theorem \ref{thm:ReflexiveCompact}.
\begin{thm}[Banach--Alaoglu] \label{thm:Banach-Alaoglu} 
Let $X$ be a normed space. Then the closed unit ball of the dual space $X^*$ is compact in the weak$^*$ topology.
\end{thm}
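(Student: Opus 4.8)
The plan is to realize the closed unit ball $B_{X^*} = \gkl{x^* \in X^* ; \norm{x^*} \leq 1}$ as a closed subset of a product of compact sets and then invoke Tychonoff's theorem. First I would observe that whenever $\norm{x^*} \leq 1$, the value $x^*(x)$ lies in the compact disc $D_x := \gkl{\lambda \in \K ; \abs{\lambda} \leq \norm{x}}$ for every $x \in X$. Hence the evaluation map
\[
  \Phi : B_{X^*} \to P := \prod_{x \in X} D_x, \qquad \Phi(x^*) = \rkl{x^*(x)}_{x \in X},
\]
is well defined and obviously injective, and by Tychonoff's theorem the product space $P$, equipped with the product topology, is compact.

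Next I would verify that $\Phi$ is a homeomorphism onto its image, where $B_{X^*}$ carries the subspace weak$^*$ topology and $\Phi(B_{X^*})$ the subspace topology inherited from $P$. This is a matter of unwinding definitions: both the weak$^*$ topology on $B_{X^*}$ and the product topology on $P$ are the initial topologies with respect to the coordinate evaluations $x^* \mapsto x^*(x)$ and $(t_y)_{y} \mapsto t_x$, respectively, and $\Phi$ intertwines these evaluations, so it suffices to check the homeomorphism property on subbasic open sets, which is routine.

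Then I would show that $\Phi(B_{X^*})$ is closed in $P$. A tuple $(t_x)_{x \in X} \in P$ belongs to the image precisely when the assignment $x \mapsto t_x$ is linear, since any such linear functional automatically satisfies $\abs{t_x} \leq \norm{x}$ and is therefore bounded with norm at most $1$. The linearity requirements --- $t_{x+y} - t_x - t_y = 0$ for all $x,y \in X$, and $t_{\lambda x} - \lambda t_x = 0$ for all $\lambda \in \K$ and $x \in X$ --- each cut out a closed subset of $P$, because the maps $(t_x)_x \mapsto t_{x+y} - t_x - t_y$ and $(t_x)_x \mapsto t_{\lambda x} - \lambda t_x$ are continuous on $P$. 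Intersecting these closed sets shows that $\Phi(B_{X^*})$ is closed, hence compact as a closed subset of the compact space $P$; since $\Phi$ is a homeomorphism onto its image, $B_{X^*}$ is compact in the weak$^*$ topology.

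The only genuinely nontrivial ingredient here is Tychonoff's theorem (equivalently, the axiom of choice). Everything else is careful bookkeeping with initial topologies and with the continuity of the coordinate maps on $P$, so I expect no real obstacle beyond keeping the two topologies straight.
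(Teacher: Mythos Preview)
Your proof is correct and follows the classical Tychonoff argument. The paper itself does not give a proof but merely cites \cite[2.6.18 Theorem, p.229]{Megginson1998}; the proof found there is precisely the embedding-into-a-product argument you outline, so your approach coincides with the cited one.
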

\begin{proof}
Cf. \cite[2.6.18 Theorem, p.229]{Megginson1998}.
\end{proof}

The weak and weak$^*$ topologies are never metrizable in the context of infinite dimensional spaces. 
However, if $X$ is separable, then the unit ball of $X^*$ is metrizable in the weak$^*$ topology; 
see for example \cite[Theorem V.5.1]{Conway90}. Moreover, if $X$ is separable and reflexive, then the unit ball of $X$ is metrizable in the weak topology; see the discussion following \cite[Theorem V.5.1]{Conway90}.

Finally, we mention the following basic consequence of the Hahn--Banach theorem.

\begin{thm} \label{thm:Convex->NormClosure=WeakClosure} 
If $M$ is a convex subset of a normed space, then its norm closure coincides with its weak closure. 
In particular, $M$ is weakly closed if and only if it is norm closed.
\end{thm}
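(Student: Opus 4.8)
The plan is to establish the two inclusions $\overline{M}^{\norm{\cdot}} \subseteq \overline{M}^{\,w}$ and $\overline{M}^{\,w} \subseteq \overline{M}^{\norm{\cdot}}$ between the norm closure and the weak closure of $M$, and then to read off the ``in particular'' statement. For the first inclusion I would use only that the weak topology is coarser than the norm topology: hence every weakly closed set is norm closed, so $\overline{M}^{\,w}$ is in particular a norm-closed set containing $M$, and therefore contains the smallest such set, namely $\overline{M}^{\norm{\cdot}}$. Convexity plays no role in this direction.

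The actual content of the theorem is the reverse inclusion. I would first note that $\overline{M}^{\norm{\cdot}}$ is again convex, since the closure of a convex subset of a topological vector space is convex. It then suffices to prove the general fact that a norm-closed convex set $C \subseteq X$ is weakly closed; applying this with $C = \overline{M}^{\norm{\cdot}}$ shows that $\overline{M}^{\norm{\cdot}}$ is a weakly closed set containing $M$, hence contains $\overline{M}^{\,w}$, and equality of the two closures follows.

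To show that a norm-closed convex set $C$ is weakly closed, I would verify that its complement is weakly open. Fix $x_0 \in X \setminus C$. Since the singleton $\gkl{x_0}$ is compact and convex, $C$ is closed and convex, and the two sets are disjoint, the geometric Hahn--Banach theorem (strict separation of a compact convex set from a disjoint closed convex set) provides $x^* \in X^*$ and $\gamma \in \R$ with $\mathrm{Re}\, x^*(x_0) < \gamma \le \mathrm{Re}\, x^*(x)$ for every $x \in C$. The set $U = \gkl{x \in X : \mathrm{Re}\, x^*(x) < \gamma}$ is then weakly open, being the preimage of an open half-line under the weakly continuous map $x \mapsto \mathrm{Re}\, x^*(x)$; it contains $x_0$ and is disjoint from $C$. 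Thus every point of $X \setminus C$ has a weak neighbourhood contained in $X \setminus C$, so $C$ is weakly closed.

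For the final assertion: if $M$ is norm closed, then $M = \overline{M}^{\norm{\cdot}} = \overline{M}^{\,w}$, so $M$ is weakly closed; conversely, if $M$ is weakly closed, it is norm closed because the weak topology is coarser. The only genuine subtlety is invoking the appropriate form of the separation theorem — the \emph{strict} version, so that the separating affine half-space is open rather than merely closed, together with the passage to real parts when the scalar field is $\mathbb{C}$; this can be found in \cite[Chapter 5]{Conway90} or \cite{Rudin91}.
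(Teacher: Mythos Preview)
Your proof is correct and is precisely the standard Hahn--Banach separation argument; the paper does not supply its own proof but merely cites \cite[2.5.16 Theorem]{Megginson1998}, where essentially the same reasoning is carried out. The one cosmetic point worth noting is that strict separation requires $C \neq \emptyset$, but the empty set is trivially weakly closed, so no harm is done.
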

\begin{proof}
Cf. \cite[2.5.16 Theorem, p.216]{Megginson1998}. 
\end{proof}

The analogue of Theorem \ref{thm:Convex->NormClosure=WeakClosure} for the weak$^*$ topology is false 
if $X$ is not reflexive. 
Indeed, if $X$ is not reflexive, then the kernel of any element of $X^{**} \setminus i(X)$ is a norm closed subspace of $X^*$ that is not weak$^*$ closed
(see \cite[Theorem IV.3.1 and Theorem V.1.3]{Conway90}).

\subsection{Semi-continuity and coercivity}

In this subsection, we recall the notions of semi-continuity and coercivity of functionals on topological spaces. These guarantee the existence of minima and will be used throughout the paper.

In the following we denote the set of the extended real numbers by $\Rbar:=\ekl{-\infty,+\infty}:=\R\cup\gkl{+\infty,-\infty}$. 

\begin{dfn} \label{dfn:LowerSemiContinuous} 
Let $X$ be a topological space, $x\in X$, and let $\mcN(x)$ denote the set of all open neighborhoods of $x$ in $X$. A functional $f:X\to\Rbar$ is said to be \emph{lower semicontinuous at $x\in X$}, if for every $t\in\R$, with $t<f(x)$ , there exists $U\in\mcN(x)$ such that $t < f(y)$ for every $y\in U$. We say that $f$ is \emph{lower semicontinuous (l.s.c)} on $X$ if $f$ is lower semicontinuous at each point $x\in X$.
\end{dfn}

The notion of upper semicontinuity is obtained by replacing $<$ with $>$ in the previous definition.

If $X$ is a metric space, then a function $f: X \to \R$ is lower semicontinuous at $x \in X$ if and only if
\[
	f(x) \leq \liminf_{j\to\infty} f\rkl{x_j} \; ,
\]
for every sequence $\rkl{x_j}_{j\in\N}$ converging to $x$ in $X$.

We will make use of the following well known result.

\begin{lem} \label{lem:norm_weakly_lsc}
Let $X$ be a normed space.
\begin{enumerate}
	\item The norm on $X$ is weakly lower semicontinuous.
	\item The norm on $X^*$ is weak$^*$ lower semicontinuous.
\end{enumerate}
\end{lem}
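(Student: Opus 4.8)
The plan is to prove both parts by exhibiting the norm as a supremum of a family of continuous functions, since a pointwise supremum of lower semicontinuous functions is always lower semicontinuous. For part (a), the starting point is the Hahn--Banach theorem, which guarantees that for every $x \in X$ there exists a norming functional $x^* \in X^*$ with $\norm{x^*} = 1$ and $x^*(x) = \norm{x}$. Consequently one has the representation
\[
  \norm{x} = \sup\gkl{ \operatorname{Re} x^*(x) ; x^* \in X^*,\ \norm{x^*} \leq 1 }
  \qquad (x \in X).
\]
Each map $x \mapsto \operatorname{Re} x^*(x)$ is continuous with respect to the weak topology, by the very definition of the weak topology as the coarsest topology making all elements of $X^*$ continuous (and $\operatorname{Re}$ is continuous on $\K$). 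Hence the norm, being a supremum of weakly continuous functions, is weakly lower semicontinuous; I would state the elementary fact that a supremum of l.s.c.\ functions is l.s.c.\ and cite Definition \ref{dfn:LowerSemiContinuous} or simply prove it in one line from the definition.

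For part (b), the argument is symmetric but slightly easier, as it does not require Hahn--Banach. For $x^* \in X^*$ one has directly from the definition of the dual norm that
\[
  \norm{x^*} = \sup\gkl{ \abs{x^*(x)} ; x \in X,\ \norm{x} \leq 1 }
  = \sup\gkl{ \operatorname{Re} x^*(x) ; x \in X,\ \norm{x} \leq 1 },
\]
where the last equality follows by replacing $x$ with a suitable unimodular scalar multiple. Each evaluation map $x^* \mapsto \operatorname{Re} x^*(x)$ is continuous with respect to the weak$^*$ topology, again by the definition of that topology, so $\norm{\cdot}$ on $X^*$ is a supremum of weak$^*$ continuous functions and therefore weak$^*$ lower semicontinuous.

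I do not anticipate a genuine obstacle here; the only point requiring a little care is the reduction of the supremum over $\abs{x^*(x)}$ to a supremum over $\operatorname{Re} x^*(x)$ in the complex case (needed so that the functions in the supremum are honestly weakly/weak$^*$ continuous rather than merely having continuous modulus), and the invocation of Hahn--Banach in part (a) to ensure the supremum is actually attained and hence equals $\norm{x}$ rather than something smaller. Alternatively, one could phrase everything sequentially via the Eberlein--\v{S}mulian-type remark: if $x_j \to x$ weakly then $x^*(x_j) \to x^*(x)$ for each fixed $x^*$, so $\abs{x^*(x)} = \lim_j \abs{x^*(x_j)} \leq \liminf_j \norm{x_j}$ whenever $\norm{x^*} \leq 1$, and taking the supremum over such $x^*$ gives $\norm{x} \leq \liminf_j \norm{x_j}$; but since the weak and weak$^*$ topologies are not metrizable in infinite dimensions, I would prefer the topological supremum argument above as the clean general statement.
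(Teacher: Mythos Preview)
Your argument is correct: writing the norm as a pointwise supremum of (real parts of) continuous linear functionals and using that a supremum of lower semicontinuous functions is lower semicontinuous is exactly the standard proof. The paper itself does not give an argument but simply cites \cite[Theorems 2.5.21 and 2.6.14]{Megginson1998}, whose proofs proceed along the same lines, so your self-contained version is entirely in keeping with the intended approach.
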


\begin{proof}
See, for instance, \cite[Theorem 2.5.21]{Megginson1998} and \cite[Theorem 2.6.14]{Megginson1998}.
\end{proof}

The following concept is crucial in the calculus of variations. 
Recall that a subset $A$ of a topological space is countably compact if every countable open cover of $A$ has a finite subcover. Clearly, every compact set is countably compact.

\begin{dfn} \label{dfn:Coerciveness} 
Let $X$ be topological space. We say that a functional $f:X\to\Rbar$ is \emph{coercive} on $X$ if the closure of its sublevel set $\gkl{f\leq t}:=\gkl{x\in X; f(x)\leq t}$ is countably compact for every $t\in\R$. This is equivalent to the existence of closed countably compact sets $K_t\subseteq X$ such that $\gkl{f\leq t}\subseteq K_t$ for every $t\in\R$. A functional $f:X\to\Rbar$ is \emph{mildly coercive} if there exists a non-empty countably compact set $K\subseteq X$ such that $\inf_{x\in X} f(x) = \inf_{x\in K} f(x)$.
\end{dfn}

\begin{rem} \label{rem:IntermediateCoerciveness} 
If $f:X\to\Rbar$ is coercive, then $f$ is also mildly coercive. In fact, if $f\neq +\infty$, then there exists $t\in\R$ such that $\gkl{f\leq t}$ is not empty, and we can take $K$ as the closure of this set in $X$. In the case $f= +\infty$ we may take any countably compact subset of $X$ as $K$. The converse is in general not true. An example of a non-coercive, mildly coercive function is given by any periodic function $f:\R^n\to\R$.
\end{rem}

The following theorem is a well known result from the calculus of variations.

\begin{thm} \label{thm:MinimizerConvergenceCoerciveLSC} 
Let $X$ be a topological space. If $f:X\to\Rbar$ is coercive and lower semicontinuous, then
\begin{enumerate}
\item $f$ has a minimum point in $X$;
\item if $\rkl{x_j}_{j\in\N}$ is a minimizing sequence of $f$ in $X$ and $x$ is a cluster point of $\rkl{x_j}_{j\in\N}$, then $x$ is a minimum point of $f$ in $X$;
\item if $f$ is not identically $+\infty$, then every minimizing sequence for $f$ has a cluster point.
\end{enumerate}
\end{thm}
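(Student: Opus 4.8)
The plan is to run the \emph{direct method of the calculus of variations}, proving the three assertions in the order (c), (b), (a): (b) will use only lower semicontinuity, (c) will use only coercivity, and (a) then follows by combining them. Write $m := \inf_{x \in X} f(x) \in \Rbar$. The first step is to dispose of the degenerate case $f \equiv +\infty$, in which $m = +\infty$, every point of $X$ is a minimum point, (a) and (b) are trivial and (c) is vacuous by its hypothesis; so from then on one may assume $f \not\equiv +\infty$, i.e. $m < +\infty$, which in particular makes minimizing sequences available (take $x_j$ with $f(x_j) < m + 1/j$ if $m \in \R$, resp. $f(x_j) < -j$ if $m = -\infty$).

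For (c), I would take a minimizing sequence $\rkl{x_j}_{j\in\N}$, fix a real $t > m$ so that $x_j \in \gkl{f \le t}$ for all large $j$, and set $K := \overline{\gkl{f \le t}}$, which is countably compact by coercivity. The tail closures $A_N := \overline{\gkl{x_j : j \ge N}}$ are closed, nested, nonempty, and contained in $K$ for $N$ large, so they form a countable family of closed subsets of $K$ with the finite intersection property; countable compactness of $K$ (in the form dual, via De~Morgan, to the covering definition) then produces a point in $\bigcap_N A_N$, and any such point is a cluster point of $\rkl{x_j}$. Alternatively one can just quote the standard fact that every sequence in a countably compact space has a cluster point.

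For (b), given a minimizing sequence $\rkl{x_j}$ with cluster point $x$, I would argue by contradiction: since $f(x) \ge m$ always, suppose $f(x) > m$ and pick a real $t$ with $m < t < f(x)$ (any real below $f(x)$ if $m = -\infty$; $f(x) = +\infty$ causes no trouble). Lower semicontinuity of $f$ at $x$ gives $U \in \mcN(x)$ with $f > t$ on $U$; as $x$ is a cluster point, $f(x_j) > t$ for infinitely many $j$, contradicting $f(x_j) \to m < t$. Hence $f(x) = m$. Part (a) is then immediate: take any minimizing sequence, extract a cluster point by (c), and invoke (b); in particular the infimum is attained.

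I expect the only real obstacle to be conceptual bookkeeping rather than any hard estimate: what is assumed is \emph{countable} compactness of (closures of) sublevel sets, not sequential compactness, so one cannot pass to a convergent subsequence and must instead manipulate cluster points and the finite intersection property as above — this is precisely what allows the statement to be applied in non-metrizable settings such as the weak$^*$ topology on a non-reflexive dual space. A secondary, purely routine, point is keeping the $\Rbar$-valued cases ($m \in \R$, $m = \pm\infty$, $f(x) = +\infty$) straight.
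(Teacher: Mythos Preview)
Your argument is correct: the order (c), (b), (a) is exactly the direct method, the finite intersection property reformulation of countable compactness is the right tool for (c) in the absence of metrizability, and your treatment of the $\Rbar$-cases is careful enough. One cosmetic point: in (c) you only need the tail closures $A_N$ for $N$ beyond the index at which the sequence enters $\gkl{f \le t}$, and you implicitly use that $K$ is closed so that these $A_N$ actually sit inside $K$; both are fine as written but worth making explicit in a final version.

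As for comparison with the paper: there is nothing to compare, since the paper does not give a proof at all --- it simply cites \cite[Theorem 1.15]{DalMaso1993}. Your self-contained argument is the standard proof one finds there, so you have supplied what the paper outsourced.
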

\begin{proof}
Cf. \cite[Theorem 1.15, p.13]{DalMaso1993}.
\end{proof}

\subsection{\texorpdfstring{$\Gamma$}{[Gamma]}-convergence} \label{subsec:GammaConvergence} 

Here we give a short introduction to $\Gamma$-convergence following the book by Dal Maso \cite{DalMaso1993}. We also list some useful properties for applied problems. Another reference is the book by Braides \cite{Braides2005}.

\begin{dfn} \label{dfn:GammaLim} 
Let $\rkl{X,\tau}$ be a topological space, $x\in X$, and $\rkl{f_j}_{j\in\N}$ a sequence of functionals with $f_j: X\to\Rbar$ for all $j\in\N$. The \emph{$\mathit{\Gamma}$-lower limit} and \emph{$\mathit{\Gamma}$-upper limit} of the sequence $\rkl{f_j}_{j\in\N}$ at $x$ are defined by 
\[
	\rkl{\Gamma\hyph\liminf_{j\to\infty} f_j}(x) = \sup_{U\in\mcN(x)} \liminf_{j\to\infty} \inf_{y\in U} f_j(y) \; ,
\]
\[
	\rkl{\Gamma\hyph\limsup_{j\to\infty} f_j}(x) = \sup_{U\in\mcN(x)} \limsup_{j\to\infty} \inf_{y\in U} f_j(y) \; ,
\]
where $\mcN(x)$ is the set of all open neighborhoods of $x$ in $X$.
If there exists a functional $f_\infty: X\to\Rbar$ such that 
\[
	\rkl{\Gamma\hyph\liminf_{j\to\infty} f_j}(x) = \rkl{\Gamma\hyph\limsup_{j\to\infty} f_j}(x) = f_\infty(x) 
\]
for all $x\in X$, then we write $f_\infty = \Gamma\hyph\smash[b]{\lim\limits_{j\to\infty}} f_j$ and we say that the sequence $\rkl{f_j}_{j\in\N}$ \emph{$\mathit{\Gamma}$-converges} to $f_\infty$ (in $X$) or that $f_\infty$ is the \emph{$\mathit{\Gamma}$-limit} of $\rkl{f_j}_{j\in\N}$ (in $X$).
\end{dfn}

\begin{rem}
If $X$ is a metric space, then $\rkl{f_j}_{j\in\N}$ $\Gamma$-converges to $f_\infty:X\to\Rbar$ if and only if the following conditions are satisfied:
\begin{enumerate}
\item for every $x\in X$ and for every sequence $\rkl{x_j}_{j\in\N}$ in $X$ converging to $x$, the inequality
\[
	f_\infty(x) \leq \liminf_{j\to\infty} f_j\rkl{x_j} \;
\]
holds, and
\item for every $x\in X$ there exists a sequence $\rkl{x_j}_{j\in\N}$ in $X$ converging to $x$ such that
\[
	f_\infty(x) \geq \limsup_{j\to\infty} f_j\rkl{x_j} \; .
\]
\end{enumerate}
A proof of the equivalence can be found in \cite[Proposition 8.1, p.86]{DalMaso1993}. 
It turns out that in our setting, the characterization in Definition \ref{dfn:GammaLim} is somewhat easier to deal with. In addition, using Definition \ref{dfn:GammaLim} allows us to deal with weak and weak$^*$ topologies, which are typically not metrizable.
\end{rem}

\begin{rem} \label{rem:SequentialSpacesGammaLim} 
Actually, the above equivalence, as well as all other results from \cite[Proposition 8.1]{DalMaso1993}, also hold true in a more general class of spaces, called sequential spaces. These are topological spaces whose topology is given by sequentially open sets. This generalization can be easily proven by using the fact that every sequential space is a quotient of some first-countable space, which was shown in \cite{Franklin1965}.
\end{rem}

\begin{rem} \label{rem:GammaLimLSC&ConstSeqGC} 
The $\Gamma$-lower limit and the $\Gamma$-upper limit of a sequence of functionals $\rkl{f_j}_{j\in\N}$ from a topological space $X$ into $\Rbar$ are both lower semicontinuous on $X$; see \cite[Proposition 6.8]{DalMaso1993}.
\end{rem}

It is helpful to recall the relationship between $\Gamma$-convergence and pointwise or uniform convergence.

\begin{prop} \label{rem:PointwiseUniformGammaConvergence} 
Let $X$ be a topological space and let $\rkl{f_j}_{j\in\N}$ be a sequence of functionals from $X$ into $\Rbar$. 
\begin{enumerate}
\item The following inequalities hold: 
\[
	\Gamma\hyph\liminf_{j\to\infty} f_j \leq \liminf_{j\to\infty} f_j \; ,\quad \Gamma\hyph\limsup_{j\to\infty} f_j \leq \limsup_{j\to\infty} f_j \; .
\]
In particular, if $\rkl{f_j}_{j\in\N}$ $\mathit{\Gamma}$-converges to $f_\infty$ and converges pointwise to $f$, then $f_\infty\leq f$. 
\item If each $f_j$ is lower semicontinuous and $\rkl{f_j}_{j\in\N}$ converges uniformly to $f$, then $\rkl{f_j}_{j\in\N}$ $\Gamma$-converges to $f$.
\end{enumerate}
\end{prop}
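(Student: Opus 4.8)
For part~(a), I would simply unwind Definition~\ref{dfn:GammaLim}. Fix $x \in X$. Since $x \in U$ for every $U \in \mcN(x)$, we have $\inf_{y \in U} f_j(y) \leq f_j(x)$ for all $j \in \N$; applying $\liminf_{j\to\infty}$, respectively $\limsup_{j\to\infty}$, to both sides and then taking the supremum over $U \in \mcN(x)$ yields
\begin{gather*}
  \rkl{\Gamma\hyph\liminf_{j\to\infty} f_j}(x) \leq \liminf_{j\to\infty} f_j(x), \\
  \rkl{\Gamma\hyph\limsup_{j\to\infty} f_j}(x) \leq \limsup_{j\to\infty} f_j(x).
\end{gather*}
The ``in particular'' statement is then immediate: if $\rkl{f_j}_{j\in\N}$ $\Gamma$-converges to $f_\infty$, then $f_\infty = \Gamma\hyph\liminf_{j\to\infty} f_j$, and if it converges pointwise to $f$, then $\liminf_{j\to\infty} f_j = f$, so $f_\infty \leq f$.

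For part~(b), the plan is to reduce $\Gamma$-convergence to two inequalities. Uniform convergence implies pointwise convergence, so part~(a) gives $\Gamma\hyph\limsup_{j\to\infty} f_j \leq \limsup_{j\to\infty} f_j = f$. Moreover $\Gamma\hyph\liminf_{j\to\infty} f_j \leq \Gamma\hyph\limsup_{j\to\infty} f_j$ holds trivially, since for each $U \in \mcN(x)$ one has $\liminf_{j\to\infty}\inf_{y\in U} f_j(y) \leq \limsup_{j\to\infty}\inf_{y\in U} f_j(y)$ and the supremum over $U$ preserves this. Hence it suffices to prove the lower bound $\Gamma\hyph\liminf_{j\to\infty} f_j \geq f$: together with the above it forces $f \leq \Gamma\hyph\liminf_{j\to\infty} f_j \leq \Gamma\hyph\limsup_{j\to\infty} f_j \leq f$, so all terms coincide and $\Gamma\hyph\lim_{j\to\infty} f_j = f$.

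To prove $\rkl{\Gamma\hyph\liminf_{j\to\infty} f_j}(x) \geq f(x)$, I would fix $x \in X$ and $\varepsilon > 0$ and use uniform convergence to obtain $N \in \N$ with $f_j(y) \geq f(y) - \varepsilon$ for all $j \geq N$ and all $y \in X$. Then for each fixed $U \in \mcN(x)$ we get $\inf_{y \in U} f_j(y) \geq \inf_{y\in U} f(y) - \varepsilon$ whenever $j \geq N$, hence $\liminf_{j\to\infty} \inf_{y\in U} f_j(y) \geq \inf_{y\in U} f(y) - \varepsilon$; taking the supremum over $U \in \mcN(x)$ and letting $\varepsilon \to 0$ gives $\rkl{\Gamma\hyph\liminf_{j\to\infty} f_j}(x) \geq \sup_{U \in \mcN(x)} \inf_{y \in U} f(y)$. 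I would finish by recording that the uniform limit of lower semicontinuous functions is lower semicontinuous — a routine $3\varepsilon$ argument from Definition~\ref{dfn:LowerSemiContinuous}: given $t < f(x)$, pick $j$ with $\sup_y \abs{f_j(y) - f(y)}$ small, use lower semicontinuity of $f_j$ to find $U$ on which $f_j$ exceeds $t$ by a margin, and transfer this to $f$ — so $f$ is lower semicontinuous, and then $\sup_{U \in \mcN(x)} \inf_{y \in U} f(y) = f(x)$ is immediate from Definition~\ref{dfn:LowerSemiContinuous}.

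The argument is almost entirely formal; the one genuine step — and the sole place the hypothesis that each $f_j$ is lower semicontinuous enters — is the stability of lower semicontinuity under uniform limits. This hypothesis is essential: on $X = \R$, the constant sequence with $f_j(0) = 1$ and $f_j(x) = 0$ for $x \neq 0$ converges uniformly, yet its $\Gamma$-limit is the lower semicontinuous envelope of the limit (which vanishes identically) rather than the limit itself. One minor technicality: if $f$ takes the values $\pm\infty$, the notion ``converges uniformly'' and the estimate $f_j(y) \geq f(y) - \varepsilon$ should be read through a bounded metric on $\Rbar$, which leaves the structure of the proof unchanged.
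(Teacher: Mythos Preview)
Your argument is correct. The paper does not actually prove this proposition; it simply cites Dal~Maso's monograph (\cite[Propositions 5.1 and 5.2, Remark 5.3]{DalMaso1993}). Your proof supplies a self-contained derivation directly from Definition~\ref{dfn:GammaLim}, which is precisely the route taken in the cited source: part~(a) by the trivial estimate $\inf_{y\in U} f_j(y)\leq f_j(x)$, and part~(b) by bounding the $\Gamma\hyph\liminf$ from below via uniform approximation and then invoking lower semicontinuity of the limit to identify $\sup_{U\in\mcN(x)}\inf_{y\in U} f(y)$ with $f(x)$. Your closing caveat about reading uniform convergence through a bounded metric on $\Rbar$ is exactly the convention Dal~Maso adopts, so nothing is missing.
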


\begin{proof}
See, for instance, \cite[Proposition 5.1]{DalMaso1993} for part (a) and \cite[Proposition 5.2, Remark 5.3]{DalMaso1993} for part (b).
\end{proof}

We will now give the central theorem that makes $\Gamma$-convergence useful and important for applications. For this we need following definitions.

\begin{dfn} \label{dfn:EquiCoerciveness} 
Let $X$ be a topological space. We say that a sequence $\rkl{f_j}_{j\in\N}$ of functionals from $X$ into $\Rbar$ is \emph{equi\nobreakdash-coercive} if for every $t\in\R$ there exists a closed countably compact set $K_t\subseteq X$ of $X$ such that $\gkl{f_j\leq t}\subseteq K_t$ for every $j\in\N$. The sequence $\rkl{f_j}_{j\in\N}$ is \emph{equi-mildly coercive} if there exists a non-empty countably compact set $K\subseteq X$ such that $\inf_{x\in X} f_j(x) = \inf_{x\in K} f_j(x)$ for all $j\in\N$. 
\end{dfn}

\begin{dfn} \label{dfn:MinimizingSequenceForSequenceOfFunctionals} 
Let $f:X\to\Rbar$ be a functional and let $\varepsilon>0$. An \emph{$\varepsilon$-minimizer} of $f$ in $X$ is a point $x\in X$ such that
\[
	f(x) \leq \max\gkl{\inf_{y\in X} f(y) + \varepsilon \, ,\, -\frac{1}{\varepsilon}} \; .
\]
\end{dfn}

The following theorem can be seen as a generalization of Theorem \ref{thm:MinimizerConvergenceCoerciveLSC} to sequences of functionals.

\begin{thm} \label{thm:MinimizerConvergenceGC} 
Let $X$ be a topological space and let $\rkl{f_j}_{j\in\N}$ be a sequence of functionals from $X$ into $\Rbar$.
Suppose that $\rkl{f_j}_{j\in\N}$ is equi-mildly coercive and $\mathit{\Gamma}$-converges to $f_\infty$.
\begin{enumerate}
  \item The functional $f_\infty$ attains its minimum on $X$ and 
\[
	\min_{x\in X} f_\infty (x) = \lim_{j\to\infty} \inf_{x\in X} f_j (x) \; .
\]
\item
Let $\rkl{x_j}_{j\in\N}$ be a sequence in $X$ such that $x_j$ is an $\varepsilon_j$-minimizer of $f_j$ in $X$ for every $j\in\N$, where $\rkl{\varepsilon_j}_{j\in\N}$ is a sequence of positive real numbers converging to 0. Then any cluster point $x_\infty$ of $\rkl{x_j}_{j\in\N}$ is a minimizer of $f_\infty$ in $X$, and 
\[
	f_\infty (x_\infty) = \lim_{j\to\infty} f_j \rkl{x_j} \; .
\]
\item If moreover $\rkl{f_j}_{j\in\N}$ is equi-coercive and $f_\infty$ is not identically $+\infty$,
  then every sequence $\rkl{x_j}_{j\in\N}$ as in (b) has a cluster point.
\end{enumerate}
\end{thm}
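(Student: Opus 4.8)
The plan is to prove part (a) first and then obtain (b) and (c) from it by elementary estimates on $\liminf$ and $\limsup$. Throughout I write $m_j := \inf_{x\in X} f_j(x)$ and $m_\infty := \inf_{x\in X} f_\infty(x)$, and, using equi-mild coercivity, I fix a non-empty countably compact set $K\subseteq X$ with $\inf_{x\in K} f_j(x) = m_j$ for every $j\in\N$. Besides the hypotheses I will use only two things already available: that $f_\infty$ is lower semicontinuous (being a $\Gamma$-limit; see Remark~\ref{rem:GammaLimLSC&ConstSeqGC}), and the standard topological fact that every sequence in a countably compact space has a cluster point, which follows by intersecting the nested closed sets $\overline{\gkl{x_k; k\ge n}}$.

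For (a) I would prove the two inequalities $m_\infty\ge\limsup_{j\to\infty} m_j$ and $\inf_{x\in K} f_\infty(x)\le\liminf_{j\to\infty} m_j$. The first is immediate from the definition of the $\Gamma$-upper limit: for any $x\in X$, using $X$ itself as a neighborhood of $x$ gives $f_\infty(x) = \rkl{\Gamma\hyph\limsup_{j\to\infty} f_j}(x)\ge\limsup_{j\to\infty}\inf_{y\in X} f_j(y) = \limsup_{j\to\infty} m_j$, and no coercivity is needed. The second is the heart of the matter: fix any real $c>\liminf_{j\to\infty} m_j$, so that $S := \gkl{j\in\N; m_j<c}$ is infinite, and for each $j\in S$ pick $x_j\in K$ with $f_j(x_j)<c$ (possible since $\inf_{x\in K} f_j(x) = m_j<c$); let $\bar x\in K$ be a cluster point of $\rkl{x_j}_{j\in S}$. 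Then for every neighborhood $U$ of $\bar x$ we have $x_j\in U$ for infinitely many $j\in S$, hence $\inf_{y\in U} f_j(y)\le f_j(x_j)<c$ for infinitely many $j\in\N$ and so $\liminf_{j\to\infty}\inf_{y\in U} f_j(y)\le c$; taking the supremum over $U\in\mcN(\bar x)$ yields $f_\infty(\bar x) = \rkl{\Gamma\hyph\liminf_{j\to\infty} f_j}(\bar x)\le c$, whence $\inf_{x\in K} f_\infty(x)\le c$, and letting $c$ decrease to $\liminf_{j\to\infty} m_j$ (and running the argument for every $c\in\R$ if that limit is $-\infty$) gives the inequality. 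Since $K\subseteq X$ forces $\inf_{x\in K} f_\infty(x)\ge m_\infty$, chaining the two inequalities shows that $\lim_{j\to\infty} m_j$ exists and equals $m_\infty = \inf_{x\in K} f_\infty(x)$. Finally, $f_\infty|_K$ is lower semicontinuous on $K$ and its sublevel sets relative to $K$ are closed subsets of the countably compact set $K$, hence $f_\infty|_K$ is coercive and lower semicontinuous on $K$, so Theorem~\ref{thm:MinimizerConvergenceCoerciveLSC}(a) gives a point $x_\infty\in K$ with $f_\infty(x_\infty) = \inf_{x\in K} f_\infty(x) = m_\infty$; this completes (a). I expect this second inequality to be the main obstacle, as it is the only place where compactness really enters and it requires working with the non-sequential definition of the $\Gamma$-lower limit and extracting a cluster point using only countable compactness rather than sequential compactness or metrizability.

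For (b), the $\varepsilon_j$-minimizer condition reads $f_j(x_j)\le\max\gkl{m_j+\varepsilon_j,\,-\tfrac{1}{\varepsilon_j}}$; since $m_j\to m_\infty$ by (a) and $\varepsilon_j\to 0$, this gives $\limsup_{j\to\infty} f_j(x_j)\le m_\infty$, while $f_j(x_j)\ge m_j$ gives $\liminf_{j\to\infty} f_j(x_j)\ge m_\infty$, so $\lim_{j\to\infty} f_j(x_j) = m_\infty$. If $x_\infty$ is a cluster point of $\rkl{x_j}_{j\in\N}$, then $f_\infty(x_\infty)\ge m_\infty$ trivially, and the reverse bound follows exactly as in the crux step of (a): for any neighborhood $U$ of $x_\infty$ one has $x_j\in U$, hence $\inf_{y\in U} f_j(y)\le f_j(x_j)\le\max\gkl{m_j+\varepsilon_j,\,-\tfrac{1}{\varepsilon_j}}$, for infinitely many $j$, so $\liminf_{j\to\infty}\inf_{y\in U} f_j(y)\le m_\infty$ and thus $f_\infty(x_\infty)\le m_\infty$; hence $x_\infty$ minimizes $f_\infty$ and $f_\infty(x_\infty) = \lim_{j\to\infty} f_j(x_j)$. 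For (c), equi-coercivity provides for each $t\in\R$ a closed countably compact $K_t$ with $\gkl{f_j\le t}\subseteq K_t$ for all $j$; since $f_\infty\not\equiv+\infty$ we have $m_\infty<+\infty$, so fixing $t>m_\infty$ and using $m_j\to m_\infty$ and $\varepsilon_j\to 0$ shows $f_j(x_j)\le\max\gkl{m_j+\varepsilon_j,\,-\tfrac{1}{\varepsilon_j}}\le t$ for all large $j$, i.e. a tail of $\rkl{x_j}_{j\in\N}$ lies in the countably compact set $K_t$ and therefore has a cluster point, which is a cluster point of $\rkl{x_j}_{j\in\N}$. A little extra care is needed for the extended-real-valued degenerate cases ($m_\infty\in\gkl{-\infty,+\infty}$, or $f_\infty\equiv+\infty$), but in each the estimates above go through unchanged.
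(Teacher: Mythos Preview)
Your proof is correct. The main difference from the paper is that the paper does not actually prove (a) or the ``cluster points are minimizers'' half of (b); it simply cites \cite[Theorem 7.4]{DalMaso1993} and \cite[Corollary 7.20]{DalMaso1993}. You instead give a direct, self-contained argument: the $\limsup$ bound from taking $U=X$ in the $\Gamma$-upper limit, the $\liminf$ bound via a cluster point of near-minimizers in the countably compact set $K$, and attainment via lower semicontinuity of $f_\infty$ on $K$. This is exactly the standard proof one finds in Dal Maso, so the content is the same, but your write-up is more self-contained and makes explicit why only countable compactness (not sequential compactness) is needed. For the value-convergence part of (b) and for (c), your squeeze argument and your choice of a level $t>m_\infty$ to trap a tail of $\rkl{x_j}$ in $K_t$ coincide with the paper's proof essentially line for line.
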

\begin{proof}
Part (a) is proved in \cite[Theorem 7.4, p.69]{DalMaso1993}. 
	
In the setting of (b), \cite[Corollary 7.20, p.81]{DalMaso1993} shows that any cluster point $x_\infty$ of $\rkl{x_j}_{j\in\N}$ is a minimizer of $f_\infty$. 
Moreover, either 
\begin{align*}
	-\infty < \inf_{x \in X} f_j(x) &\leq f_j\rkl{x_j} \leq \inf_{x \in X} f_j(x) + \varepsilon_j \\
\intertext{or }
	-\infty = \inf_{x \in X} f_j(x) &\leq f_j{x_j} \leq -\frac{1}{\varepsilon_j}
\end{align*}
for all $j\in\N$. By part (a), both sides of these inequalities converge to $\min_{x \in X} f_\infty(x) = f_\infty\rkl{x_\infty}$.

In the setting of (c), let $t = \min_{x \in X} f_\infty(x) + 1$ if $\min_{x \in X} f_\infty(x) \neq -\infty$ and let $t\in\R$ be arbitrary otherwise. 
By equi-coercivity, there exists a closed countably compact set $K \subseteq X$ such that $\gkl{f_j \leq t} \subseteq K$ for all $j\in\N$. 
Part (a) implies that for all but finitely many $j\in\N$, we have $x_j \in \gkl{f_j \leq t} \subseteq K$; whence $\rkl{x_j}_{j\in\N}$ has a cluster point.
\end{proof}

The notion of $\Gamma$-convergence can be extended to families indexed by a continuous parameter in a straightforward manner, as given in \cite[Section 1.9, p.37]{Braides2005}. Moreover, many examples for application of $\Gamma$-convergence are given in \cite{Braides2005}. \\
A more detailed investigation and deeper results, such as the relation between $\Gamma$-convergence and G-convergence or topological set convergence in the sense of Kuratowski, can be found in the book by Dal Maso \cite{DalMaso1993}.

\section{\texorpdfstring{$\mathbf{\Gamma}$}{[Gamma]}-convergence and coerciveness of Tikhonov functionals}\label{sec:GammaConvergenceCoercivenessTikhonovFunctionals}

In this section, we consider the Tikhonov functional arising from the equation $F(x)=y$. As usually only inexact data $y^\delta$ are available instead of $y$, we will be approximating the exact Tikhonov functional by Tikhonov functionals arising from the inexact data and a suitable sequence of operators. Our aim is to apply Theorem \ref{thm:MinimizerConvergenceGC} and derive convergence of minimum values and minimum points of inexact functionals toward the minima and minimizers of the exact functional, respectively. To this end, we postulate conditions under which we will be able to prove $\Gamma$-convergence and equi-mild coercivity of the functionals. Since both concepts depend on the topology of the underlying space, we will consider the most commonly used ones, that is the norm topology, the weak topology, and the weak$^*$ topology. At the end we will compare the different conditions for those topologies. 

\subsection{The setting} \label{subsec:Setting}
Let $X,Y$ be Banach spaces. On $X$, we will consider three different topologies: the norm topology, the weak topology, and, provided that $X$ is a dual space, the weak$^*$ topology.
Let $\tau$ be any of these three topologies.

We assume the following setup:
\begin{itemize}
  \item Let $F:X\to Y$ be an operator (not necessarily linear) with domain $\emptyset\neq\dom(F)\subseteq X$
    such that for all $y \in Y$, the function
    \begin{equation*}
      X \to \R \cup \gkl{+\infty}, \quad x \mapsto
      \begin{cases}
        \norm{F(x) - y}_Y &, \text{ if } x \in \dom(F) \; , \\
        + \infty &, \text{ otherwise,}
      \end{cases}
    \end{equation*}
    is $\tau$-lower semicontinuous on $X$ and norm upper semicontinuous on $\dom(F)$.
    
  \item Let $\Omega:X\to \R \cup \gkl{+\infty}$ be a functional with the following properties: 
	\begin{itemize}
    \item its effective domain $\dom\rkl{\Omega} = \gkl{x \in X ; \Omega(x) < \infty}$ satisfies
      $\dom\rkl{\Omega} \supseteq \dom(F)$;
    \item $\Omega$ is $\tau$-lower semicontinuous on $X$ and norm-upper semicontinuous on $\dom\rkl{\Omega}$.
	\end{itemize}
\end{itemize}

\begin{rem}
Note that the two norm-upper semicontinuity assumptions can be replaced by norm continuity since $\tau$-lower semicontinuity implies norm-lower semicontinuity. 
Moreover, if $\dom(F)$ is $\tau$-closed, then $\tau$-lower semicontinuity of $x \mapsto \norm{F(x) - y}_Y$ on $\dom(F)$ implies $\tau$-lower semicontinuity on $X$ of the first function above.
\end{rem}

Let $y \in Y$ and $\alpha \in [0,\infty)$. Let $p \in [1,\infty)$ and define the target Tikhonov functional as
\[
  T: X \to \R \cup \gkl{+\infty}, \quad x \mapsto 
  \begin{cases}
    \frac{1}{p} \norm{F(x) - y}_Y^p + \alpha \, \Omega(x) &, \text{ if } x \in \dom(F) \; , \\
    + \infty &, \text{ otherwise} \; .
  \end{cases}
\]

We define approximations to $T$ in the following way. 
\begin{itemize}
  \item 
For $n\in\N$, let $F_n: \dom\rkl{F_n} \to Y$ be operators with $\dom\rkl{F_n} \subseteq \dom(F)$ for all $n\in\N$.
\item 
Let the sequence of domains $\rkl{\dom\rkl{F_n}}_{n\in\N}$ be increasing, i.e. $\dom\rkl{F_n}\subseteq\dom\rkl{F_{n+1}}$ for all $n\in\N$, with the property that
$\bigcup_{n=1}^\infty \dom\rkl{F_n}$ is norm dense in $\dom(F)$. 
\item Assume that $\rkl{F_n}_{n\in\N}$ converges to $F$ locally uniformly with respect to $\tau$
  on the $\tau$-closure $\overline{\dom(F)}^{\tau}$, meaning that for each $x\in\overline{\dom(F)}^\tau$ there exists a $\tau$-open neighborhood $U$ of $x$ such that 
\[
  \lim_{n\to\infty} \sup_{x' \in U \cap \dom\rkl{F_n}} \norm{F_n\rkl{x'} - F\rkl{x'}}_Y =0 \; .
\]
(Note that by the density assumption on $\bigcup_{n=1}^\infty \dom\rkl{F_n}$, the set $U \cap \dom\rkl{F_n}$ is not empty for sufficiently large $n$.)
\end{itemize}

Moreover, let $\rkl{y_n}_{n\in\N}$ be a sequence in $Y$ with $\lim_{n\to\infty} \norm{y_n - y}_Y = 0$ and let $\rkl{\alpha_n}_{n\in\N}$ be a sequence in $\rkl{0,\infty}$ converging to $\alpha$. 
We define the approximating Tikhonov functionals by 
\[
	T_n: X \to \R \cup \gkl{+\infty}, \quad x \mapsto 
	\begin{cases}
		\frac{1}{p} \norm{F_n(x) - y_n}_Y^p + \alpha_n \, \Omega(x) &, \text{ if } x\in \dom\rkl{F_n} \; , \\
		+\infty &, \text{ otherwise} \; .
	\end{cases}
\]

\begin{rem}
Let $X$ be equipped with the weak topology. 
Theorem \ref{thm:Convex->NormClosure=WeakClosure} shows that if $\dom(F)$ is convex, then $\dom(F)$ is weakly closed if and only if it is norm closed. 
Moreover, Lemma \ref{lem:norm_weakly_lsc} implies that if $\dom(F)$ is weakly closed and $F$ is weak-to-weak continuous, then the map $x \mapsto \norm{F(x) - y}_Y$ is weakly lower semicontinuous. 
Again by Theorem \ref{thm:Convex->NormClosure=WeakClosure}, if each $\dom\rkl{F_n}$ is convex, then so is the increasing union $\bigcup_{n\in\N} \dom\rkl{F_n}$, hence this union is norm dense in $\dom(F)$ if and only if it is weakly dense.
\end{rem}

\subsection{\texorpdfstring{$\Gamma$}{[Gamma]}-convergence of Tikhonov functionals} \label{subsec:GammaConvergenceTF} 
In this part, we will prove that $\Gamma\hyph\lim_{n\to\infty} T_n=T$ under suitable assumptions, that is we will prove that both the $\Gamma$-lower and $\Gamma$-upper limits from Definition \ref{dfn:GammaLim} of our approximating Tikhonov functionals are equal to the target Tikhonov functional. As it turns out, the definition in terms of neighborhoods is slightly easier to deal with in our setting and allows us to handle all three mentioned topologies in a similar way. 

\begin{thm} \label{thm:ThreeTopGammaConvTF}
Assume the setup of Subsection \ref{subsec:Setting}. 
Then $\rkl{T_n}_{n\in\N}$ $\mathit{\Gamma}$-converges to $T$ with respect to $\tau$.
\end{thm}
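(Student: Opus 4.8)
The plan is to verify the two defining inequalities for $\Gamma$-convergence directly from Definition \ref{dfn:GammaLim}, namely $(\Gamma\hyph\limsup_n T_n)(x) \leq T(x)$ and $(\Gamma\hyph\liminf_n T_n)(x) \geq T(x)$ for every $x \in X$, which together with the trivial inequality $\Gamma\hyph\liminf \leq \Gamma\hyph\limsup$ force $\Gamma\hyph\lim_n T_n = T$. Since both quantities are suprema over $\mcN(x)$ of (lim inf or lim sup of) $\inf_{y \in U} T_n(y)$, the whole argument reduces to estimating $\inf_{y\in U} T_n(y)$ for small $\tau$-neighborhoods $U$ of a fixed point.

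For the $\Gamma\hyph\limsup$ bound I would argue as follows. If $x \notin \dom(F)$ then $T(x) = +\infty$ and there is nothing to prove, so assume $x \in \dom(F)$, hence $x \in \overline{\dom(F)}^\tau$ and also $x \in \dom(\Omega)$. Fix $\varepsilon > 0$. Using norm-upper semicontinuity (equivalently norm-continuity) of $x \mapsto \norm{F(x)-y}_Y$ and of $\Omega$ on $\dom(F)$, together with the norm density of $\bigcup_n \dom(F_n)$ in $\dom(F)$, choose a point $x' \in \dom(F_{n_0})$ (for some $n_0$) with $x'$ norm-close enough to $x$ that $\frac1p\norm{F(x')-y}_Y^p + \alpha\,\Omega(x') \leq T(x) + \varepsilon$. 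Now for $n \geq n_0$ we have $x' \in \dom(F_n)$, and the local uniform convergence $F_n \to F$ near $x'$ gives $\norm{F_n(x') - y_n}_Y \to \norm{F(x')-y}_Y$ (also using $y_n \to y$), while $\alpha_n \to \alpha$; hence $\limsup_n T_n(x') \leq T(x) + \varepsilon$. Finally, every $\tau$-open neighborhood $U$ of $x$ eventually contains $x'$... — this is the delicate point, see below — so $\limsup_n \inf_{y \in U} T_n(y) \leq \limsup_n T_n(x') \leq T(x)+\varepsilon$, and taking the supremum over $U$ and then $\varepsilon \to 0$ yields $(\Gamma\hyph\limsup_n T_n)(x) \leq T(x)$.

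For the $\Gamma\hyph\liminf$ bound, fix $x$ and let $t < T(x)$; I must produce $U \in \mcN(x)$ with $\liminf_n \inf_{y\in U} T_n(y) \geq t$. Consider the functional $G(x') := \frac1p\norm{F(x')-y}_Y^p + \alpha\,\Omega(x')$ extended by $+\infty$ off $\dom(F)$; by the setup this is $\tau$-lower semicontinuous on $X$, so there is $U \in \mcN(x)$ with $G(y') > t$ for all $y' \in U$. The goal is then to show $\inf_{y' \in U \cap \dom(F_n)} T_n(y') \geq t - o(1)$. For $y' \in U \cap \dom(F_n)$ we have $T_n(y') = \frac1p\norm{F_n(y')-y_n}_Y^p + \alpha_n\Omega(y')$; the issue is that the local uniform convergence of $F_n$ to $F$ is only guaranteed on a possibly smaller neighborhood than $U$, and the lower semicontinuity was used to get $U$. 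I would reconcile these by first shrinking: pick a $\tau$-open $V \subseteq U$ containing $x$ on which $F_n \to F$ uniformly (using the local uniform convergence hypothesis at $x \in \overline{\dom(F)}^\tau$), and separately a $\tau$-open $W$ on which $G > t'$ for a $t'$ slightly larger than $t$ (using $\tau$-l.s.c.\ of $G$); then work with $V \cap W$. On $V$, $\norm{F_n(y') - y_n}_Y \geq \norm{F(y')-y}_Y - \norm{F_n(y')-F(y')}_Y - \norm{y_n - y}_Y$, where the last two terms are bounded uniformly in $y' \in V \cap \dom(F_n)$ by some $\eta_n \to 0$; combined with $\Omega \geq 0$... (actually $\Omega$ maps into $\R \cup\{+\infty\}$, but only its values where relevant matter, and on $\dom(F)$ one has $\alpha_n \Omega(y') \to \alpha \Omega(y')$ uniformly is NOT available, so instead I use $\alpha_n \to \alpha$ and $\liminf$) one gets, after a short estimate using $p \geq 1$ and continuity of $s \mapsto s^p$, that $\liminf_n \inf_{y' \in (V\cap W)\cap\dom(F_n)} T_n(y') \geq t'$, hence $\geq t$. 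Taking the supremum over neighborhoods finishes the bound.

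The main obstacle, and the step deserving the most care, is the interaction between the three "neighborhood" conditions — the $\tau$-l.s.c.\ of the combined functional (used for $\Gamma\hyph\liminf$), the local uniform convergence $F_n \to F$ with respect to $\tau$ (which only controls $F_n$ on \emph{some} neighborhood, not on an arbitrary prescribed $U$), and the norm density of $\bigcup_n \dom(F_n)$ (used for $\Gamma\hyph\limsup$, but density is in norm while the neighborhoods $U$ are $\tau$-open, and when $\tau$ is the weak or weak$^*$ topology a norm-dense set need not be "$\tau$-dense near $x$" in the sense of meeting every $U$). The $\Gamma\hyph\limsup$ difficulty is resolved by noting that $\tau$ is coarser than the norm topology, so any $\tau$-open $U \ni x$ is also norm-open, whence norm density does put points of $\dom(F_n)$ into $U$; one then only needs $x \in \overline{\dom(F)}^{\|\cdot\|}$, which holds since $x \in \dom(F)$. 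The $\Gamma\hyph\liminf$ difficulty is resolved by always intersecting the neighborhood coming from l.s.c.\ with the one coming from local uniform convergence before taking infima, as sketched above; once the neighborhoods are aligned the remaining estimates are routine triangle-inequality and elementary-continuity arguments.
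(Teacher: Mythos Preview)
Your $\Gamma\hyph\limsup$ argument is correct and matches the paper's: both exploit that every $\tau$-open set is norm-open, choose a point $x'\in\dom(F_{n_0})$ norm-close to $x$ via the density hypothesis, use pointwise convergence $T_n(x')\to T(x')$, and then norm-upper semicontinuity of $T$ on $\dom(F)$ to pass from $x'$ back to $x$. The paper phrases this as ``prove the bound in the norm topology, then coarsen''; you phrase it as ``$\tau$-open implies norm-open''; the content is the same.

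The $\Gamma\hyph\liminf$ argument, however, has a genuine gap hiding inside the phrase ``after a short estimate''. You invoke only the $\tau$-lower semicontinuity of the \emph{combined} functional $G=\frac{1}{p}\norm{F(\cdot)-y}^p+\alpha\,\Omega$, obtaining a neighborhood on which $G>t'$. This gives no separate control of $\norm{F(y')-y}$ and $\Omega(y')$: nothing in the setup forces $\Omega\ge 0$, so on that neighborhood one may have $\Omega(y')$ arbitrarily negative, compensated by $\norm{F(y')-y}$ arbitrarily large. In the first situation, if $\alpha_n>\alpha$ then $(\alpha_n-\alpha)\Omega(y')$ is a large negative error, not $o(1)$ uniformly in $y'$; in the second, the map $s\mapsto s^p$ is not \emph{uniformly} continuous on $[0,\infty)$ for $p>1$, so one cannot pass from $\frac{1}{p}(\norm{F(y')-y}-\eta_n)_+^p$ to $\frac{1}{p}\norm{F(y')-y}^p-o(1)$ uniformly. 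Your parenthetical ``$\alpha_n\Omega(y')\to\alpha\Omega(y')$ uniformly is NOT available'' correctly flags the issue but ``use $\alpha_n\to\alpha$ and $\liminf$'' does not resolve it.

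The paper sidesteps this by \emph{not} working with $G$. It writes $T_n\ge S_n+R_n$ with $S_n$ the data term and $R_n=\alpha_n\Omega$, applies the superadditivity $\Gamma\hyph\liminf(S_n+R_n)\ge\Gamma\hyph\liminf S_n+\Gamma\hyph\liminf R_n$, and then handles each summand using the \emph{separate} $\tau$-l.s.c.\ of $\norm{F(\cdot)-y}$ and of $\Omega$ that the setup provides. For $S_n$ it further reduces to $p=1$ (continuous increasing functions preserve $\Gamma\hyph\liminf$), so only the triangle inequality and local uniform convergence remain. Your route can be repaired by making the same separation explicitly: choose constants $s<\norm{F(x)-y}$ and $r<\Omega(x)$ and neighborhoods $W_1,W_2$ with $\norm{F(y')-y}>s$ on $W_1$ and $\Omega(y')>r$ on $W_2$; then on $V\cap W_1\cap W_2$ one gets $T_n(y')\ge\frac{1}{p}(s-\eta_n)_+^p+\alpha_n r$, a sequence of \emph{constants} tending to $\frac{1}{p}s^p+\alpha r$, and finally let $s\nearrow\norm{F(x)-y}$, $r\nearrow\Omega(x)$. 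But once you do this you have reproduced the paper's decomposition in all but name.
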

\begin{proof}
In each part, we have to show the lim\hspace{0.1em}inf inequality $T \leq \Gamma\hyph\liminf_{n\to\infty} T_n$ and the lim\hspace{0.1em}sup inequality $\Gamma\hyph\limsup_{n\to\infty} T_n \leq T$. We start by showing the lim\hspace{0.1em}sup inequality in the norm topology. Since every open neighborhood in the weak or the weak$^*$ topology is also an open neighborhood in the norm topology, this also establishes the lim\hspace{0.1em}sup inequality in the weak and in the weak$^*$ topology, see \cite[Proposition 6.3]{DalMaso1993}.

To show the lim\hspace{.1em}sup inequality in the norm topology, let $x \in X$. If $x\notin\dom(F)$, then $T(x) = \infty$, so the inequality trivially holds. Let $x\in\dom(F)$. For $k\in\N$ with $k\geq1$, let $B_{1/k}(x)$ be the open ball with radius $\frac{1}{k}$ centered at $x$. 
In the definition of the $\Gamma$-upper limit, it suffices to consider open neighborhoods of $x$ of the form $B_{1/k}(x)$, that is,
\[
  \Gamma\hyph\limsup_{n\to\infty} T_n(x) = \lim_{k\to\infty} \limsup_{n\to\infty} \inf_{x' \in B_{1/k}(x)} T_n\rkl{x'} \; ;
\]
see for instance \cite[Remark 4.3]{DalMaso1993}. 
Let $k\in\N$ with $k\geq1$. Since $\bigcup_{n=1}^\infty \dom\rkl{F_n}$ is norm dense in $\dom(F)$, there exist $n_0\in\N$ and $\xi_k \in B_{1/k}(x) \cap \dom\rkl{F_{n_0}}$. If $n \geq n_0$, then $\xi_k \in \dom\rkl{F_n}$, so the convergence assumption on $\rkl{F_n}_{n\in\N}$, which in particular implies pointwise convergence on $\bigcup_{n=1}^\infty \dom\rkl{F_n}$, yields $\lim_{n\to\infty} \norm{F_n\rkl{\xi_k} - F\rkl{\xi_k}}_Y = 0$. Hence
\[
  \lim_{n\to\infty} T_n\rkl{\xi_k} = T \rkl{\xi_k}
\]
and so
\[
  \limsup_{n\to\infty} \inf_{x' \in B_{1/k}(x)} T_n(x) \leq \limsup_{n\to\infty} T_n \rkl{\xi_k} = T \rkl{\xi_k} \; .
\]
Since $\lim_{k\to\infty} \norm{\xi_k - x}_X = 0$ and $\xi_k \in \dom(F)$ for all $k$, we find that $\limsup_{k\to\infty} T \rkl{\xi_k} \leq T(x)$ and so
\[
  \Gamma\hyph\limsup_{n\to\infty} T_n(x) = \lim_{k\to\infty} \limsup_{n\to\infty} \inf_{x' \in B_{1/k}(x)} T_n(x) \leq \limsup_{k\to\infty} T \rkl{\xi_k} \leq T(x) \; ,
\]
which completes the first step.

It remains to show the lim\hspace{0.1em}inf inequality in all three cases. 
For all $n\in\N$ define
\begin{align*}
  S_n: X \to \R \cup \gkl{+\infty}, & \quad x \mapsto
  \begin{cases}
    \frac{1}{p} \norm{F_n(x) - y_n}_Y^p &, \text{ if } x \in \dom\rkl{F_n} \; , \\
    + \infty &, \text{ otherwise} \; ,
  \end{cases}
\intertext{and }
  R_n: X \to \R \cup \gkl{+\infty}, & \quad x \mapsto
	\begin{cases}
		\alpha_n \, \Omega(x) &, \text{ if } x \in \dom\rkl{\Omega} \; , \\
		+\infty &, \text{ otherwise} \; .
	\end{cases}
\end{align*}
Then $T_n \geq S_n + R_n$ for all $n\in\N$ and so by a basic inequality for the $\Gamma\hyph\liminf$ of a sum  (see \cite[Proposition 6.17]{DalMaso1993}), we find that
\[
  \Gamma\hyph\liminf_{n\to\infty} T_n \geq \Gamma\hyph\liminf_{n\to\infty} \rkl{S_n + R_n} \geq \Gamma\hyph\liminf_{n\to\infty} S_n + \Gamma\hyph\liminf_{n\to\infty} R_n \; .
\]
To deal with the second summand, observe that since $\rkl{\alpha_n}_{n\in\N}$ tends to $\alpha$, we have
\[
  \rkl{\Gamma\hyph\liminf_{n\to\infty} R_n}(x) = \sup_{U \in \mcN(x)} \liminf_{n\to\infty} \inf_{x' \in U} R_n\rkl{x'}
  = \alpha \, \sup_{U \in \mcN(x)} \inf_{x' \in U} \Omega\rkl{x'}
  = \alpha \, \Omega(x) \; ,
\]
where that last equality follows from the lower semicontinuity of $x \mapsto \Omega(x)$.

It remains to show that
\[
  \rkl{\Gamma\hyph\liminf_{n\to\infty} S_n}(x) \geq
	\begin{cases}
		\frac{1}{p} \norm{F(x) - y}_Y^p &, \text{ if } x \in \dom(F) \; , \\
		+ \infty &, \text{ otherwise} \; .
	\end{cases}
\]
Since the function $t \mapsto \frac{1}{p} t^p$ is continuous and increasing, it suffices to consider the case $p=1$, as the $\Gamma$-lower limit is preserved by applying continuous increasing functions, see \cite[Proposition 6.16]{DalMaso1993}. 
If $x$ is not in the $\tau$-closure of $\dom(F)$, then there exists a $\tau$-open neighborhood $U$ of $x$ that has an empty intersection with the closure of $\dom(F)$, and so $\Gamma\hyph\liminf_{n\to\infty} {T_n}(x) = + \infty$, so the inequality holds in this case.

Let $x$ be in the $\tau$-closure of $\dom(F)$, $\varepsilon > 0$, and $t\in\R$ with $\norm{F(x)-y}_Y>t$. 
Here, we use the convention $\norm{F(x)-y}_Y=+\infty$ for $x\notin\dom(F)$. For all $x' \in \dom\rkl{F_n}$ and all $n\in\N$, the triangle inequality yields
\[
  \norm{F_n\rkl{x'} - y_n}_Y \geq \norm{F\rkl{x'} - y}_Y - \rkl{\norm{F_n\rkl{x'} - F\rkl{x'}}_Y + \norm{y - y_n}}_Y \; .
\]
The convergence assumptions on $\rkl{F_n}_{n\in\N}$ and on $\rkl{y_n}_{n\in\N}$ show that there exist $n_0\in\N$ and an open neighborhood $V_1$ of $x$ such that for all $n \geq n_0$ and all $x' \in V_1 \cap \dom\rkl{F_n}$, each of the two summands in parentheses is less than $\varepsilon$. By lower semicontinuity of $x \mapsto \norm{F(x) - y}_Y$, we may further find another open neighborhood $V_2$ of $x$ and achieve that
\[
  \norm{F\rkl{x'} - y}_Y > t
\]
for all $x' \in V_2 \cap \dom(F)$. Therefore, setting $V:= V_1 \cap V_2$, we have
\begin{align*}
  \Gamma\hyph\liminf_{n\to\infty} S_n(x)
  &= \sup_{U \in \mcN(x)} \liminf_{n\to\infty} \inf_{x' \in U \cap \dom\rkl{F_n}} \norm{F_n\rkl{x'} - y_n}_Y \\
  &\geq \liminf_{n\to\infty} \inf_{x' \in V \cap \dom\rkl{F_n}} \norm{F_n\rkl{x'} - y_n}_Y \\
  &\geq t - 2\, \varepsilon \; .
\end{align*}
Since $\varepsilon > 0$ and $t<\norm{F(x)-y}_Y$ were arbitrary, the desired inequality follows.
\end{proof}

\begin{rem} \label{rem:WeakerConditionForThreeTopGCTF}
Examination of the proof shows that the assumption of locally uniform convergence of $\rkl{F_n}_{n\in\N}$ can be weakened to
\[
  \inf_{U \in \mcN(x)} \limsup_{n\to\infty} \sup_{x' \in U \cap \dom\rkl{F_n}} \norm{F_n\rkl{x'} - F\rkl{x'}}_Y = 0 \quad \text{ for all } x \in \dom(F) \; .
\]
Equivalently, for all $x\in\dom(F)$ and all $\varepsilon > 0$, there exist an open neighborhood $U$ of $x$ and $n_0\in\N$ such that
\[
  \norm{F_n\rkl{x'} - F\rkl{x'}}_Y < \varepsilon \quad \text{ for all } n \geq n_0 \text{ and all } x' \in U \cap \dom\rkl{F_n} \; .
\]
This differs from local uniform convergence in that the neighborhood $U$ may depend on $\varepsilon$.
\end{rem}

We require the following elementary lemma.
\begin{lem} \label{lem:ScaledGammaLim}
Let $\rkl{f_j}_{j\in\N}$ be a sequence of functionals from a topological space $X$ onto $\Rbar$ that $\mathit{\Gamma}$-converges to \\$f: X \to \Rbar$. 
Let $\rkl{\lambda_j}_{j\in\N}$ be a sequence in $\rkl{0,\infty}$ converging to $\lambda\in\ekl{0,\infty}$. 
Then 
\begin{equation*}
	\Gamma\hyph\lim_{j\to\infty} \rkl{\lambda_j \, f_j}(x) = \lambda \, f(x)
\end{equation*}
whenever the right-hand side is not of the form $0 \cdot \pm \infty$ or $\infty \cdot 0$.
\end{lem}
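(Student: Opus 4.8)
The plan is to argue directly from the neighborhood characterization of the $\Gamma$-upper and $\Gamma$-lower limits in Definition \ref{dfn:GammaLim}. Fix a point $x\in X$ for which $\lambda\,f(x)$ is not an indeterminate product, and for each $U\in\mcN(x)$ abbreviate $a_j^U:=\inf_{y\in U}f_j(y)\in\Rbar$. Since $\lambda_j>0$, multiplication by $\lambda_j$ commutes with the infimum, so $\inf_{y\in U}(\lambda_j f_j)(y)=\lambda_j\,a_j^U$ and hence
\[
  \rkl{\Gamma\hyph\liminf_{j\to\infty}(\lambda_j f_j)}(x)=\sup_{U\in\mcN(x)}\liminf_{j\to\infty}\lambda_j\,a_j^U,\qquad
  \rkl{\Gamma\hyph\limsup_{j\to\infty}(\lambda_j f_j)}(x)=\sup_{U\in\mcN(x)}\limsup_{j\to\infty}\lambda_j\,a_j^U,
\]
whereas the hypothesis $\Gamma\hyph\lim_{j\to\infty}f_j=f$ gives $\sup_U\liminf_j a_j^U=\sup_U\limsup_j a_j^U=f(x)$. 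Everything then reduces to comparing $\liminf_j\lambda_j a_j^U$ with $\lambda\liminf_j a_j^U$ (and likewise for $\limsup$) and taking suprema over $U$, using also the elementary observation that the $\Gamma$-lower limit never exceeds the $\Gamma$-upper limit.

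First I would treat the generic case $\lambda\in(0,\infty)$. Here the elementary identities $\liminf_j\lambda_j b_j=\lambda\liminf_j b_j$ and $\limsup_j\lambda_j b_j=\lambda\limsup_j b_j$ hold for every sequence $(b_j)$ in $\Rbar$, because $(\lambda_j)$ converges to the finite positive number $\lambda$; applying them with $b_j=a_j^U$ and using $\sup_U(\lambda\,c_U)=\lambda\sup_U c_U$ immediately yields $\Gamma\hyph\liminf_j(\lambda_j f_j)(x)=\Gamma\hyph\limsup_j(\lambda_j f_j)(x)=\lambda f(x)$.

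Next I would handle the degenerate cases. If $\lambda=0$, then $f(x)\in\R$. For the upper estimate, every $U$ satisfies $\limsup_j a_j^U\le f(x)<\infty$, so $(a_j^U)$ is eventually bounded above by a finite constant and therefore $\limsup_j\lambda_j a_j^U\le 0$; taking $\sup_U$ gives $\Gamma\hyph\limsup_j(\lambda_j f_j)(x)\le 0$. For the lower estimate, since $\sup_U\liminf_j a_j^U=f(x)>-\infty$ there is some $U_0$ with $\liminf_j a_j^{U_0}>-\infty$, so $(a_j^{U_0})$ is eventually bounded below by a finite constant, whence $\liminf_j\lambda_j a_j^{U_0}\ge 0$ and thus $\Gamma\hyph\liminf_j(\lambda_j f_j)(x)\ge 0$; combined with the inequality between the two $\Gamma$-limits, both equal $0=\lambda f(x)$. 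If $\lambda=\infty$, then $f(x)\ne 0$. When $f(x)>0$, pick $U_0$ with $\liminf_j a_j^{U_0}>0$ (possible since $\sup_U\liminf_j a_j^U=f(x)>0$); then $(a_j^{U_0})$ is eventually bounded below by a positive constant, so $\lambda_j a_j^{U_0}\to+\infty$, hence already $\Gamma\hyph\liminf_j(\lambda_j f_j)(x)=+\infty=\lambda f(x)$ and the $\Gamma$-upper limit must agree. When $f(x)<0$, every $U$ has $\limsup_j a_j^U\le f(x)<0$, so $(a_j^U)$ is eventually bounded above by a negative constant, $\lambda_j a_j^U\to-\infty$, and $\sup_U\limsup_j\lambda_j a_j^U=-\infty$, so both $\Gamma$-limits equal $-\infty=\lambda f(x)$.

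I expect the only real difficulty to lie in the degenerate cases $\lambda\in\gkl{0,\infty}$: there the pointwise identity $\liminf_j\lambda_j b_j=\lambda\liminf_j b_j$ genuinely fails, and one must exploit that $f(x)=\sup_U\liminf_j a_j^U$ is finite (respectively nonzero) to produce a single neighborhood $U_0$ along which $(a_j^{U_0})$ is suitably bounded, while simultaneously using that $\limsup_j a_j^U\le f(x)$ for \emph{every} neighborhood $U$ to obtain the matching opposite estimate. Keeping straight which assertions are needed for a single neighborhood and which for all neighborhoods, together with the harmless possibility that $a_j^U=\pm\infty$, is the point requiring care; the remainder is routine.
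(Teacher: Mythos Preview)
Your argument is correct. Both you and the paper work from the neighborhood definition of the $\Gamma$-limits and reduce matters to comparing $\liminf_j \lambda_j a_j^U$ with $\lambda\liminf_j a_j^U$ (and the analogous $\limsup$ statement), but the organization differs. The paper proves the single one-sided inequality $\liminf_j \lambda_j\phi_j \ge \lambda\liminf_j\phi_j$ (valid whenever the right-hand side is defined) and then, after restricting to neighborhoods $U\subseteq V$ for which $\liminf_j a_j^U$ avoids the problematic values, takes the supremum; the matching $\limsup$ inequality is dismissed with ``a similar argument''. You instead perform an explicit trichotomy on $\lambda$: in the regime $\lambda\in(0,\infty)$ you use the exact identity $\liminf_j \lambda_j b_j=\lambda\liminf_j b_j$, and in the degenerate regimes $\lambda\in\{0,\infty\}$ you separately produce a single neighborhood giving the lower bound and use all neighborhoods for the upper bound. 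Your version is slightly longer but more self-contained; in particular it handles the cases $f(x)\in\{0,\pm\infty\}$ explicitly, whereas the paper's written argument only spells out the step for $f(x)\in\R\setminus\{0\}$.
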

\begin{proof}
We use the following basic fact: 
If $\rkl{\phi_j}_{j\in\N}$ is a sequence in $\Rbar$, then
\begin{equation} \label{eqn:liminf_ineq}
	\liminf_{j\to\infty} \lambda_j \, \phi_j \geq \lambda \, \liminf_{j\to\infty} \phi_j
\end{equation}
whenever the right-hand side is defined. 
Indeed, if $-\infty < t < \liminf_{j\to\infty} \phi_j$, then $\phi_j \geq t$ for all but finitely many $j\in\N$ and so
\begin{equation*}
	\liminf_{j\to\infty} \lambda_j \, \phi_j \geq \liminf_{j\to\infty} \lambda_j \, t = \lambda \, t,
\end{equation*}
provided the right-hand side is not of the form $\infty \cdot 0$. 
Distinguishing the cases $\lambda = 0, \lambda = +\infty$ and $\lambda\in\rkl{0,\infty}$, Inequality \eqref{eqn:liminf_ineq} follows.
  
Let $x \in X$ and $U\in\mcN(x)$. 
Then Inequality \eqref{eqn:liminf_ineq} shows that
\begin{equation} \label{eqn:liminf_2}
	\liminf_{j\to\infty} \inf_{x' \in U} \rkl{\lambda_j \, f_j}\rkl{x'} \geq \lambda \, \liminf_{j\to\infty} \inf_{x' \in U} f_j\rkl{x'},
\end{equation}
provided the right-hand side is defined. 
The definition of the $\Gamma$-lower limit shows that if $f(x) \in \R \setminus \gkl{0}$, then there exists $V\in\mcN(x)$ with $\liminf_{j\to\infty} \inf_{x' \in U} f_j\rkl{x'} \in \R \setminus \gkl{0}$ for all $U\in\mcN(x)$  with $U \subseteq V$. 
So if $\lambda \, f(x)$ is defined, then \eqref{eqn:liminf_2} holds for all $U\in\mcN(x)$ with $U \subseteq V$. Taking the supremum over all such $U$ in \eqref{eqn:liminf_2} gives
\begin{equation*}
	\Gamma\hyph\liminf_{j\to\infty} \rkl{\lambda_j \, f_j}(x) \geq \lambda \, f(x).
\end{equation*}
  
A similar argument shows that
\begin{equation*}
	\Gamma\hyph\limsup_{j\to\infty} \rkl{\lambda_j \, f_j}(x) \leq \lambda \, f(x)
\end{equation*}
when the right-hand side is defined. Combining both inequalities gives the result.
\end{proof}

\begin{cor} \label{cor:InverseWeightedGammaConvTF}
Assume the setup of Subsection \ref{subsec:Setting} and that $\alpha>0$. Then $\rkl{\frac{1}{\alpha_n} T_n}_{n\in\N}$ $\mathit{\Gamma}$-converges to $\frac{1}{\alpha} T$ in the topology $\tau$.
\end{cor}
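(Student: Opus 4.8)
The plan is to derive this directly from Theorem \ref{thm:ThreeTopGammaConvTF} and the scaling Lemma \ref{lem:ScaledGammaLim}. By Theorem \ref{thm:ThreeTopGammaConvTF}, under the setup of Subsection \ref{subsec:Setting} the sequence $\rkl{T_n}_{n\in\N}$ $\Gamma$-converges to $T$ with respect to $\tau$. I would then invoke Lemma \ref{lem:ScaledGammaLim} with $f_j := T_n$, $f := T$, $\lambda_j := 1/\alpha_n$ and $\lambda := 1/\alpha$.

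The one thing to check before applying the lemma is that its hypotheses hold. Each $\alpha_n$ lies in $\rkl{0,\infty}$, so the reciprocals $1/\alpha_n$ form a sequence in $\rkl{0,\infty}$; and since $\alpha_n \to \alpha$ with $\alpha > 0$, continuity of $t \mapsto 1/t$ on $\rkl{0,\infty}$ gives $1/\alpha_n \to 1/\alpha \in \rkl{0,\infty} \subseteq \ekl{0,\infty}$, which is exactly the form of $\lambda_j$ and $\lambda$ required by Lemma \ref{lem:ScaledGammaLim}. Moreover, one must verify that the right-hand side $\frac{1}{\alpha} T(x)$ is never of an excluded form $0 \cdot \pm\infty$ or $\infty \cdot 0$: this is immediate because $\lambda = 1/\alpha$ is a strictly positive \emph{finite} real number, hence neither $0$ nor $+\infty$, so $\frac1\alpha T(x)$ is well defined for every $x \in X$ whether $T(x)$ is finite or $+\infty$. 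Lemma \ref{lem:ScaledGammaLim} then yields $\rkl{\Gamma\hyph\lim_{n\to\infty} \tfrac{1}{\alpha_n} T_n}(x) = \tfrac{1}{\alpha} T(x)$ for all $x \in X$, which is the claim.

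Since every step is a direct invocation of an already established result, there is no genuine obstacle here; the only point requiring a moment's care is confirming that the degenerate products excluded in Lemma \ref{lem:ScaledGammaLim} cannot arise, and this follows at once from the hypothesis $\alpha > 0$ (together with finiteness of $\alpha$, which is built into $\alpha \in [0,\infty)$ from the setting). It is worth noting that the assumption $\alpha>0$ is essential precisely to rule out the case $\lambda = +\infty$, where Lemma \ref{lem:ScaledGammaLim} could fail on the set $\gkl{T = 0}$.
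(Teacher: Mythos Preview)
Your proposal is correct and is precisely the argument the paper has in mind: the corollary is stated without proof immediately after Lemma~\ref{lem:ScaledGammaLim}, as a direct consequence of Theorem~\ref{thm:ThreeTopGammaConvTF} combined with that scaling lemma. Your verification that $\lambda=1/\alpha\in(0,\infty)$ excludes the degenerate products is exactly the point that makes the hypothesis $\alpha>0$ necessary.
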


If $\alpha=0$, then we have the following convergence instead:

\begin{prop} \label{prop:InverseWeightedGammaConvTF0}
Assume the conditions of Theorem \ref{thm:ThreeTopGammaConvTF} and let $\alpha = 0$. If additionally we have that 
\[
	\lim_{n\to\infty}\frac{\norm{y_n - y}_Y}{\alpha_n^{1/p}} = 0 \; ,
\]
and
\[
  \lim_{n\to\infty} \frac{\norm{F_n(x) - F(x)}_Y}{\alpha_n^{1/p}} = 0 
\]
for each $x \in F^{-1}(y)$, then $\rkl{\frac{1}{\alpha_n}T_n}_{n\in\N}$ $\mathit{\Gamma}$-converges to
\[
	\tilde{T}: X \to \R \cup \gkl{+\infty}, \quad x \mapsto 
  \begin{cases}
    \Omega(x) &, \text{ if } x \in F^{-1}(y) \; , \\
    + \infty &, \text{ otherwise} \; ,
	\end{cases}
\]
in the considered topology $\tau$.
\end{prop}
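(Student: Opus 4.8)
The strategy is to verify directly the two defining inequalities of $\Gamma$-convergence with respect to $\tau$, namely $\Gamma\hyph\liminf_{n\to\infty}\frac{1}{\alpha_n}T_n\geq\tilde T$ and $\Gamma\hyph\limsup_{n\to\infty}\frac{1}{\alpha_n}T_n\leq\tilde T$; since the $\Gamma$-lower limit never exceeds the $\Gamma$-upper limit, these two inequalities together give $\Gamma\hyph\lim_{n\to\infty}\frac{1}{\alpha_n}T_n=\tilde T$. Throughout I would keep $S_n$ and $R_n$ as in the proof of Theorem \ref{thm:ThreeTopGammaConvTF}, so that $\frac{1}{\alpha_n}T_n\geq\frac{1}{\alpha_n}S_n+\Omega$ pointwise on $X$ (the bookkeeping of $+\infty$ being harmless since $S_n\geq0$ and $\Omega>-\infty$), and on $\dom\rkl{F_n}$ one simply has $\frac{1}{\alpha_n}T_n(x)=\frac{1}{p}\rkl{\norm{F_n(x)-y_n}_Y/\alpha_n^{1/p}}^p+\Omega(x)$.

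For the lim\hspace{0.1em}sup inequality only $x\in F^{-1}(y)$ requires attention, as $\tilde T(x)=+\infty$ otherwise. The key preliminary remark is that the rate hypothesis forces $x\in\dom\rkl{F_n}$ for all sufficiently large $n$: with the convention $\norm{F_n(x)-F(x)}_Y=+\infty$ for $x\notin\dom\rkl{F_n}$ (as used already in the proof of Theorem \ref{thm:ThreeTopGammaConvTF}), the relation $\norm{F_n(x)-F(x)}_Y/\alpha_n^{1/p}\to0$ can hold only if $F_n(x)$ is eventually defined. Since $F(x)=y$ for $x\in F^{-1}(y)$, the triangle inequality combined with the two rate hypotheses gives $\norm{F_n(x)-y_n}_Y/\alpha_n^{1/p}\to0$, whence $\frac{1}{\alpha_n}T_n(x)\to\Omega(x)$. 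Inserting the constant point $x$ into each $U\in\mcN(x)$ then yields $\limsup_{n\to\infty}\inf_{x'\in U}\frac{1}{\alpha_n}T_n(x')\leq\Omega(x)$ for every such $U$, and taking the supremum over $U$ gives $\rkl{\Gamma\hyph\limsup_{n\to\infty}\frac{1}{\alpha_n}T_n}(x)\leq\Omega(x)=\tilde T(x)$.

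For the lim\hspace{0.1em}inf inequality I would combine the pointwise bound $\frac{1}{\alpha_n}T_n\geq\frac{1}{\alpha_n}S_n+\Omega$ with the subadditivity of the $\Gamma$-lower limit (\cite[Proposition 6.17]{DalMaso1993}; no $\infty-\infty$ arises since one summand is $\geq0$ and the other is $>-\infty$) and with the elementary fact that the constant sequence of the $\tau$-lower semicontinuous functional $\Omega$ $\Gamma$-converges to $\Omega$ itself. It then remains to bound $\Gamma\hyph\liminf_{n\to\infty}\frac{1}{\alpha_n}S_n$ from below. For $U\in\mcN(x)$ set $\psi_n(U):=\inf_{x'\in U}S_n(x')\geq0$; if $\liminf_{n\to\infty}\psi_n(U)>0$ then $\psi_n(U)/\alpha_n\to+\infty$ because $\alpha_n\to0$, so that $\rkl{\Gamma\hyph\liminf_{n\to\infty}\frac{1}{\alpha_n}S_n}(x)=+\infty$ as soon as $\rkl{\Gamma\hyph\liminf_{n\to\infty}S_n}(x)=\sup_{U\in\mcN(x)}\liminf_{n\to\infty}\psi_n(U)>0$. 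The proof of Theorem \ref{thm:ThreeTopGammaConvTF} shows $\rkl{\Gamma\hyph\liminf_{n\to\infty}S_n}(x)\geq\frac{1}{p}\norm{F(x)-y}_Y^p$ (with the $+\infty$ convention off $\dom(F)$), and the right-hand side is positive exactly when $x\notin F^{-1}(y)$. Hence for $x\notin F^{-1}(y)$ one obtains $\Gamma\hyph\liminf_{n\to\infty}\frac{1}{\alpha_n}T_n(x)=+\infty=\tilde T(x)$, while for $x\in F^{-1}(y)$ the trivial bound $S_n\geq0$ already yields $\Gamma\hyph\liminf_{n\to\infty}\frac{1}{\alpha_n}T_n(x)\geq0+\Omega(x)=\tilde T(x)$, and the proof is complete.

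I expect the main obstacle to be precisely the lim\hspace{0.1em}sup inequality on $F^{-1}(y)$. The obvious attempt, building a recovery sequence from the norm-dense union $\bigcup_n\dom\rkl{F_n}$, breaks down: for a point $\xi$ with $F(\xi)\neq y$ one has $\norm{F_n(\xi)-y_n}_Y^p/\alpha_n\to+\infty$ since $\alpha_n\to0$, so no fixed approximation of $x$ can be used. The two rate hypotheses are exactly what is needed to replace this by the constant sequence at $x$: they force $x$ itself into $\dom\rkl{F_n}$ for large $n$ and make the rescaled residual at $x$ tend to $0$. The remaining difficulties are purely bookkeeping issues with $\pm\infty$ when splitting $\frac{1}{\alpha_n}T_n$ and when multiplying $\Gamma$-lower limits by $\alpha_n^{-1}\to\infty$, which the sign conditions $S_n\geq0$ and $\Omega>-\infty$ resolve.
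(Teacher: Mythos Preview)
Your proof is correct and follows essentially the same approach as the paper: the lim\hspace{0.1em}sup inequality on $F^{-1}(y)$ via the constant recovery sequence $x$, the lim\hspace{0.1em}inf inequality on $F^{-1}(y)$ via $\frac{1}{\alpha_n}T_n\geq\Omega$ and $\tau$-lower semicontinuity of $\Omega$, and the case $x\notin F^{-1}(y)$ by rescaling the positive $\Gamma$-lower limit of the data-fidelity part by $\alpha_n^{-1}\to\infty$ (the paper packages this last step as an application of Lemma~\ref{lem:ScaledGammaLim}, which your $\psi_n(U)$ argument simply unfolds). Your explicit observation that the rate hypothesis forces $x\in\dom\rkl{F_n}$ for large $n$ is a detail the paper leaves implicit.
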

\begin{proof}
For any $x\notin F^{-1}(y)$ we have that $T(x)\neq0$, so Lemma \ref{lem:ScaledGammaLim} implies that $\Gamma\hyph\lim_{n\to\infty} \frac{1}{\alpha_n} T_n (x) = +\infty$ for $x\notin F^{-1}(y)$.

Let now $x\in F^{-1}(y)$. The lim\hspace{0.1em}sup inequality $\Gamma\hyph\limsup_{n\to\infty} \frac{1}{\alpha_n}T_n \leq \tilde{T}$ follows from 
\begin{align*}
	\rkl{\Gamma\hyph\limsup_{n\to\infty} \frac{1}{\alpha_n}T_n}(x) 
	&= \sup_{U \in \mcN(x)} \limsup_{n\to\infty} \inf_{x' \in U \cap \dom\rkl{F_n}} \frac{1}{p\ \alpha_n} \norm{F_n\rkl{x'} - y_n}_Y^p + \Omega\rkl{x'} \\
	&\leq \limsup_{n\to\infty} \frac{1}{p\ \alpha_n} \norm{F_n(x) - y_n}_Y^p + \Omega(x) \\
	&= \frac{1}{p} \rkl{\limsup_{n\to\infty} \frac{\norm{F_n(x) - F(x)}_Y}{\alpha_n^{1/p}} + \frac{\norm{y - y_n}_Y}{\alpha_n^{1/p}}}^p + \Omega(x) \\
	&= \Omega(x) \; .
\end{align*}
The lim\hspace{0.1em}inf inequality $\tilde{T} \leq \Gamma\hyph\liminf_{n\to\infty} \frac{1}{\alpha_n}T_n$ also follows easily from the lower semicontinuity of $\Omega$:
\begin{align*}
  \Gamma\hyph\liminf_{n\to\infty} \frac{1}{\alpha_n} T_n(x)
  \geq \Gamma\hyph\liminf_{n\to\infty} \Omega(x) = \Omega(x).
\end{align*}
Combining both inequalities yields the result.
\end{proof}

This result is similar to one by Burger, see \cite[Lemma 3.4]{burger2021variational}. However, our initial assumptions on the setting are slightly more general, and therefore his result is a specific case of the proposition above.

\subsection{Equi-mild coercivity of Tikhonov functionals} \label{subsec:EquiMildCoercivenessTF} 
In this part, we will investigate whether our Tikhonov functionals are equi-mildly coercive, that is if there exists a suitable compact set (in the relevant topology) such that all the functionals have their global infimum on that set.

\begin{prop} \label{prop:EquiMildCoercivenessTF}
Assume the setup of Subsection \ref{subsec:Setting} and let $\alpha > 0$. 
In addition, assume that the sublevel sets $\gkl{\Omega\leq t}$ are relatively countably compact in the topology $\tau$ for every $t\in\R$. 
Then $\rkl{T_n}_{n\in\N}$ is equi-coercive in the considered topology $\tau$.
\end{prop}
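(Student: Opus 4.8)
The plan is to verify the definition of equi-coercivity (Definition \ref{dfn:EquiCoerciveness}) head on: for each $t \in \R$ I would exhibit a closed, countably compact set $K_t \subseteq X$ with $\gkl{T_n \leq t} \subseteq K_t$ for every $n \in \N$. The starting observation is the trivial pointwise bound $T_n \geq \alpha_n \, \Omega$ on all of $X$. Indeed, for $x \in \dom\rkl{F_n}$ the data term $\frac{1}{p}\norm{F_n(x) - y_n}_Y^p$ is nonnegative and $\dom\rkl{F_n} \subseteq \dom(F) \subseteq \dom\rkl{\Omega}$, while for $x \notin \dom\rkl{F_n}$ we have $T_n(x) = +\infty$, so there is nothing to check. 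Since $\alpha_n > 0$, this yields $\gkl{T_n \leq t} \subseteq \gkl{\alpha_n \, \Omega \leq t} = \gkl{\Omega \leq t/\alpha_n}$ for each $n$.

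The next step is to make the threshold independent of $n$. Because $\rkl{\alpha_n}_{n\in\N} \subseteq \rkl{0,\infty}$ converges to $\alpha > 0$, only finitely many $\alpha_n$ lie outside a fixed compact subinterval of $\rkl{0,\infty}$, and each $\alpha_n$ is strictly positive, so $c := \inf_{n\in\N} \alpha_n$ and $M := \sup_{n\in\N} \alpha_n$ satisfy $0 < c \leq M < \infty$. Hence $s := \sup_{n\in\N} (t/\alpha_n) \leq \max\gkl{t/c, \, t/M}$ is finite (the maximum being $t/c$ when $t \geq 0$ and $t/M$ when $t < 0$), and therefore $\gkl{T_n \leq t} \subseteq \gkl{\Omega \leq s}$ for all $n \in \N$.

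Finally I would invoke the hypothesis: the sublevel set $\gkl{\Omega \leq s}$ is relatively countably compact in $\tau$, so $K_t := \overline{\gkl{\Omega \leq s}}^{\tau}$ is closed and countably compact. Since $\gkl{T_n \leq t} \subseteq \gkl{\Omega \leq s} \subseteq K_t$ for every $n$, the family $\rkl{T_n}_{n\in\N}$ satisfies the condition in Definition \ref{dfn:EquiCoerciveness} and is thus equi-coercive with respect to $\tau$.

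I do not expect a genuine obstacle here; the argument is essentially the classical inclusion $\gkl{T \leq t} \subseteq \gkl{\Omega \leq t/\alpha}$ for a single Tikhonov functional, the only extra ingredient being the uniform bound $\sup_n t/\alpha_n < \infty$ furnished by $\alpha_n \to \alpha > 0$. Note that no lower bound on $\Omega$ is needed for this. If equi-mild coercivity is also desired, it follows formally from equi-coercivity, exactly as in Remark \ref{rem:IntermediateCoerciveness}.
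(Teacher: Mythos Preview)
Your argument is correct and matches the paper's proof: both use the pointwise bound $T_n \geq \alpha_n\,\Omega$, a uniform positive lower bound on the $\alpha_n$ coming from $\alpha_n \to \alpha > 0$, and the assumed relative countable compactness of the sublevel sets of $\Omega$. Your explicit case distinction on the sign of $t$ (via $s = \sup_n t/\alpha_n$) is in fact slightly more careful than the paper's one-line inclusion $\gkl{T_n \leq t} \subseteq \gkl{\Omega \leq t/\delta}$, which tacitly treats $\Omega$ as nonnegative.
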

\begin{proof}
Let $t\in\R$. Since $\alpha>0$, there exists $\delta>0$ with $\alpha_n > \delta > 0$ for all $n\in\N$. Thus, we have $T_n(x) \geq \delta \, \Omega(x)$ for every $x\in\dom\rkl{F_n}$. Hence 
\[
	\gkl{T_n \leq t} \subseteq \gkl{\Omega \leq \frac{t}{\delta}}
\]
for all $n\in\N$. 
Therefore, the equi-coercivity follows from the assumed relative countable compactness of the sublevel sets of $\Omega$ w.r.t. the topology $\tau$.
\end{proof}

\begin{cor} \label{cor:Weak*TopEquiMildCoercivenessTF}
Assume the setup of Subsection \ref{subsec:Setting}, let $\alpha > 0$ and assume that the sublevel sets $\gkl{\Omega \leq t}$ are bounded for every $t\in\R$.
\begin{enumerate}
\item If $X$ is a reflexive Banach space, then $\rkl{T_n}_{n\in\N}$ is equi-coercive in the weak topology.
\item If $X$ is a dual space, then $\rkl{T_n}_{n\in\N}$ is equi-coercive in the weak$^*$ topology.
\end{enumerate}
\end{cor}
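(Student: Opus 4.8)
The plan is to deduce this directly from Proposition~\ref{prop:EquiMildCoercivenessTF}. According to that proposition, once $\alpha > 0$, the only thing that remains in order to conclude equi-coercivity of $\rkl{T_n}_{n\in\N}$ in the topology $\tau$ is to verify that for every $t \in \R$ the sublevel set $\gkl{\Omega \leq t}$ is relatively countably compact with respect to $\tau$; it suffices to exhibit, for each $t$, a $\tau$-closed and $\tau$-countably compact set containing $\gkl{\Omega \leq t}$. This is precisely where the boundedness hypothesis enters.

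For part~(a), fix $t \in \R$. By assumption $\gkl{\Omega \leq t}$ is bounded, so there is $r > 0$ with $\gkl{\Omega \leq t} \subseteq r\,\overline{B_X}$, where $\overline{B_X}$ denotes the closed unit ball of $X$. Since $X$ is reflexive, Theorem~\ref{thm:ReflexiveCompact} shows that $\overline{B_X}$, and hence $r\,\overline{B_X}$, is weakly compact. The weak topology is Hausdorff, so this set is also weakly closed, and in particular weakly countably compact. Thus $r\,\overline{B_X}$ is a weakly closed, weakly countably compact set containing $\gkl{\Omega \leq t}$, and Proposition~\ref{prop:EquiMildCoercivenessTF}, applied with $\tau$ the weak topology, yields that $\rkl{T_n}_{n\in\N}$ is equi-coercive in the weak topology.

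Part~(b) is identical, with Theorem~\ref{thm:Banach-Alaoglu} in place of Theorem~\ref{thm:ReflexiveCompact}: if $X$ is a dual space, then $\overline{B_X}$ is weak$^*$ compact, hence (the weak$^*$ topology being Hausdorff) weak$^*$ closed and weak$^*$ countably compact, so $r\,\overline{B_X}$ again serves as the required dominating set for $\gkl{\Omega \leq t}$. Invoking Proposition~\ref{prop:EquiMildCoercivenessTF} with $\tau$ the weak$^*$ topology completes the argument.

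There is no real obstacle here: the corollary is a routine specialization of Proposition~\ref{prop:EquiMildCoercivenessTF}, the content being that in reflexive (respectively dual) spaces ``bounded'' upgrades to ``relatively weakly (respectively weak$^*$) compact'' by Theorem~\ref{thm:ReflexiveCompact} (respectively Banach--Alaoglu). Because Definition~\ref{dfn:Coerciveness} and Proposition~\ref{prop:EquiMildCoercivenessTF} are phrased in terms of countable compactness rather than sequential compactness, we do not even need to appeal to the Eberlein--\v{S}mulian theorem. The only points worth a moment's care are that the weak and weak$^*$ topologies are Hausdorff, so that compact sets are closed, and that boundedness of a set does not depend on which of the three topologies one works in.
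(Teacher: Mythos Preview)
Your proof is correct and follows exactly the same route as the paper: invoke Proposition~\ref{prop:EquiMildCoercivenessTF} and supply the relative countable compactness of the sublevel sets via Theorem~\ref{thm:ReflexiveCompact} in the reflexive case and the Banach--Alaoglu theorem in the dual-space case. The paper states this in a single sentence, while you spell out the (harmless) details about Hausdorffness and closed balls.
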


\begin{proof}
This is immediate from Proposition \ref{prop:EquiMildCoercivenessTF} and the Banach--Alaoglu theorem (Theorem \ref{thm:Banach-Alaoglu}, see also Theorem \ref{thm:ReflexiveCompact}).
\end{proof}

\begin{rem} \label{rem:EquiMildCoercivenessTF}
If $\alpha = 0$ and if there exists a bounded sequence of minimizers of $\rkl{T_n}_{n\in\N}$, then $\rkl{T_n}_{n\in\N}$ is equi-mildly coercive in the weak$^*$ topology (assuming $X$ is a dual space). This again follows from the Banach--Alaoglu theorem (Theorem \ref{thm:Banach-Alaoglu}).

Moreover, if minimizers are convergent in norm, then $\rkl{T_n}_{n\in\N}$ is equi-mildly coercive in the norm topology. Indeed, if $m_n \in X$ is a minimizer of $T_n$ for all $n\in\N$ such that $\rkl{m_n}_{n\in\N}$ converges to some $m\in X$ in norm, then
\[
  K = \gkl{m_n; n\in\N} \cup \gkl{m}
\]
is norm compact, because a convergent sequence, together with its limit point, forms a compact set in any metric space. Clearly,
\[
  \inf_{x \in K} T_n(x) = \inf_{x \in X} T_n(x) \quad \text{ for all } n\in\N \; .
\]
\end{rem}

\begin{rem} \label{rem:InverseWeightedEquiMildCoercivenessTF}
Proposition \ref{prop:EquiMildCoercivenessTF} is also true for $\rkl{\frac{1}{\alpha_n}T_n}_{n\in\N}$, even for $\alpha=0$, since $\frac{1}{\alpha_n}T_n\geq\Omega$ for all $n\in\N$.
\end{rem}

\section{Examples}\label{sec:Examples}

In this section we aim to illustrate the usefulness of the above theoretical results by applying it to the following examples.

\begin{exmp} 
First we consider an integral equation of first kind with continuous kernel. To this end let $X = Y = Z = L^\infty\rkl{\ekl{0,1}}$, which is the dual space of $L^1\rkl{\ekl{0,1}}$, 
and consider the corresponding weak-$*$ topology on $L^\infty \rkl{\ekl{0,1}}$. 
Let $K: \ekl{0,1} \times \ekl{0,1} \to \R$ be continuous. 
Let 
\begin{align*}
  F: L^\infty\rkl{\ekl{0,1}} \to L^\infty\rkl{\ekl{0,1}}, \quad F(x)(t) &= \int_0^1 K\rkl{s,t} x(s) \, \mathrm{d} s \; , \\
	\Omega: L^\infty\rkl{\ekl{0,1}} \to \ekl{0,\infty}, \quad \Omega(x) &= \norm{x}_{L^\infty} \; .
\end{align*}
Obviously, both mappings are (norm-to-norm) continuous. Furthermore, $F$ is also weak$^*$-to-weak$^*$ continuous, since it is the adjoint operator of the integral operator on $L^1\rkl{\ekl{0,1}}$ with kernel $\tilde{K}\rkl{s,t} := K\rkl{t,s}$. Hence $x\mapsto\norm{F(x)-y}$ and $\Omega$ are both weak$^*$ lower semicontinuous by Lemma \ref{lem:norm_weakly_lsc}. 
Therefore, Theorem \ref{thm:ThreeTopGammaConvTF} applies to this example.
\end{exmp}

\begin{exmp} \label{Example_PI_linear}
Parameter identifications for PDEs represent an important class of inverse problems.
We consider the elliptic boundary value problem
\begin{subequations} \label{elliptic_BVP1}
\begin{eqnarray}
\label{elliptic_PDE1}
	-\Delta u + c\, u &=& f\qquad \mbox{in }D \; ,\\
\label{boundary_values}
	u &=& 0\qquad \mbox{on } \partial D \; .
\end{eqnarray}
\end{subequations}
Here, $D\subset \R^d$ denotes an open, convex, bounded domain with smooth boundary $\partial D$, and $d\in\gkl{1,2,3}$. Equation \eqref{elliptic_PDE1} can be seen as stationary Schr\"odinger equation with potential $c(x)$. The forward (direct) problem means to compute the solution $u(x)$ for given functions $c$ and $f$. The inverse problem consists of calculating the source term $f$ from $u$ with given potential $c$. The mathematical formulation of this inverse problem is
\begin{equation} \label{Example_PI_PDE1}
	F(f) = u^{\mathrm{meas}} \; ,
\end{equation}
where $F: \dom(F)\subseteq X \to Y$ maps the parameter $f$ to the unique (weak) solution $u$ of \eqref{elliptic_BVP1} and $u^{\mathrm{meas}}$ are the measured data. We define $X:=L^2(D)$ and $\dom(F) := \{f \in L^2(D) ; \norm{f}_X \leq \rho\}$,
where $\rho > 0$. To ensure the existence and uniqueness of $u$ we furthermore assume $c \in L^\infty(D)$ and $c \geq 0$ a.e. 
Since $u$ is to be a weak solution of \eqref{elliptic_BVP1} we introduce the Sobolev space $H_0^1(D)$, which is the closure of the space $C_0^\infty(D)$ of infinitely differentiable functions with compact support w.r.t. the $H^1$-norm. As usual, we equip $H_0^1(D)$ with the scalar product
\[
	\spr{u,v}_{H_0^1(D)} := \int_D \nabla u \cdot \nabla v \, \mathrm{d}x \; ,\qquad u,v \in H_0^1(D) \; .
\]
On this space its induced norm is equivalent to the $H^1$-norm.  We note that $H_0^1(D)$ is continuously embedded in $L^2(D)$ (\cite[Theorem 4.12]{adams2003sobolev}). Hence $u$ solves the variational problem
\begin{equation}
	a_c(u,v) = \ell_f(v)\qquad \mbox{for all } v\in H_0^1(D)
\end{equation}
with the symmetric bilinear form $a_c : H^1_0(D)\times H^1_0(D) \to \R$
\[
	a_c(u,v) = \int_D \rkl{ \nabla u\cdot \nabla v + c\, u\, v }\, \mathrm{d}x
\]
and the bounded, linear functional on $H_0^1(D)$
\[
	\ell_f(v) = \int_D f\:\! v \,\mathrm{d}x \; ,\qquad v \in H_0^1(D) \; . 
\]
We have that $a_c$ is $H_0^1$-coercive since
\[
	a_c(u,u) \geq \norm{u}_{H_0^1(D)}^2\qquad \mbox{for all } u\in H_0^1(D) \; .
\]
It is also continuous on $H^1_0(D)\times H^1_0(D)$ since for all $u,v\in H_0^1(D)$ we have 
\begin{align*}
	a_c(u,v) &\leq \abs{\spr{u,v}_{H_0^1(D)}} + \norm{c\, u\, v}_{L^1(D)} \\
	&\leq \norm{u}_{H_0^1(D)} \norm{v}_{H_0^1(D)} + \norm{c}_{L^2(D)} \norm{u\, v}_{L^2(D)} \\
  &\leq \norm{u}_{H_0^1(D)} \norm{v}_{H_0^1(D)} + \norm{c}_{L^2(D)} \norm{u}_{L^4(D)} \norm{v}_{L^4(D)} \\
  &\leq \rkl{1+\norm{c}_{L^2(D)} \, C_S^2} \norm{u}_{H_0^1(D)} \norm{v}_{H_0^1(D)} \; ,
\end{align*}
where $C_S$ is a constant depending only on dimension $d$ and domain $D$. In the last step we used the Sobolev Embedding Theorem, see e.g. \cite[Theorem 4.12]{adams2003sobolev}. 
The Lax--Milgram theorem is thus applicable and guarantees that the linear mapping $A_c : H_0^1(D)\to \rkl{H_0^1(D)}^* = H^{-1}(D)$ given by $A_c(u) := a_c(u,\cdot)$
is continuously invertible. 
Since $f\mapsto \ell_f$ is linear and bounded as a mapping from $L^2 (D)$ to $(H^1_0(D))^*$, this yields continuity of $f \mapsto A_c^{-1}(\ell_f) =u$ as a mapping from $L^2(D)$ to $H^1_0(D)$. 
From Theorem 4 (in \S 6.3) and Theorem 6 (in \S 6.2) of \cite{evans2010partial} we even have that, by our assumptions to $D$, the weak solution $u\in H^2 (D)$ and depends continuously on $f$. 
We define $Y := L^2(D)$. Since the inclusion $H^1_0(D) \subset L^2(D)$ is continuous, the forward operator $F: f \mapsto u$ is therefore norm-to-norm continuous. 
Moreover, since $F$ is linear, it is also weak-to-weak continuous (\cite[Theorem 2.5.11]{Megginson1998}). From this, as well as from norm and weak closedness of $\dom(F)$, it follows that the Tikhonov functional $T: \dom(F)\to \R,$
\begin{equation}
	T(f) := \frac{1}{2}\norm{F(f) - u^{\mathrm{meas}}}^2_{Y} + \frac{\alpha}{2} \norm{f}_X^2 
\end{equation}
satisfies the continuity assumptions of Subsection \ref{subsec:Setting} in the norm and in the weak topology. \\
Minimizing $T$ yields a stable regularization method for \eqref{Example_PI_PDE1}, see, e.g., \cite{engl}. Since in practical applications the boundary value problem \eqref{elliptic_BVP1} has to be solved numerically, only approximations of the forward solver $F(f)$ can be computed, e.g., by using the Finite Element method. This method relies on the Galerkin approximation for the weak solution $u$ by choosing finite dimensional, conformal Finite Element spaces $V_n\subset H^1_0(D)$, $\dim\rkl{V_n}<\infty$, $V_n\subset V_{n+1}$, and 
\[
	\overline{\bigcup_{n\in\N} V_n} = H^1_0(D) \; , 
\]
and subsequently solving the variational problems to find $u_n\in V_n$ such that
\begin{equation} \label{H01bilinear_n}
	a_c(u_n,\varphi) = \ell_f(\varphi)\qquad \mbox{for all } \varphi\in V_n \; .
\end{equation}
Again the symmetry, continuity and coercivity of $a_c$ yields a unique solution $u_n\in V_n$. This leads to the fact that in applications one minimizes the Tikhonov functional $T_n: \dom(F)\to \R,$
\begin{equation}
	T_n(f) := \frac{1}{2}\norm{F_n(f) - u^{\mathrm{meas}}}^2_{Y} + \frac{\alpha}{2} \norm{f}_X^2 \; ,
\end{equation}
where $F_n(f) := u_n$ is the unique solution of \eqref{H01bilinear_n}. 
Standard settings for $V_n$ are piecewise linear B-splines for which the convergence
\begin{equation} \label{linBsplineEst1}
  \norm{u_n-u}_{L^2(D)} \leq C\, n^{-2} \norm{f}_{L^2(D)}
\end{equation}
holds true for $n\to \infty$, where $C>0$ is a constant only depending on $D,d$ and $c$. 
The proof of this estimate relies on the C\'ea-Lemma and the Theorem of Aubin-Nitsche (see \cite{aubin1967behavior, nitsche1968kriterium} and \cite[Folgerung 7.7]{braess2013finite}). 
Higher rates are obtained by using higher order splines and for $u$ being of sufficient regularity. 
Since $\dom\rkl{F_n}=\dom(F)$, the estimate \eqref{linBsplineEst1} and the definition of $\dom(F)$ immediately yield uniform convergence $F_n\to F$ as $n\to\infty$ in $X$.

All prerequisites in Subsection \ref{subsec:Setting} are thus satisfied and Theorem \ref{thm:ThreeTopGammaConvTF} is valid for this example in both topologies, the norm topology and the weak topology. Furthermore, we have that $\rkl{T_n}_{n\in\N}$ is equi-coercive in the weak topology of $X$ (Proposition \ref{prop:EquiMildCoercivenessTF}). This leads to the important fact that from any minimizing sequences $\rkl{f_{k,n}}_{k\in\N}$ of $T_n$ one can construct a weakly convergent minimizing sequence $\rkl{f_{k(n),n}}_{n\in\N}$ of $T$ with a monotonically increasing function~$k(n)$ (by Theorem \ref{thm:MinimizerConvergenceGC}), meaning that any minimizer of $T_n$ is in this sense close to a minimizer of $T$, which is very important from a practical point of view.
\end{exmp}

\section{Conclusion} \label{sec:Conclusion}

In this article we presented simple criteria like local uniform convergence and equi-mild coercivity to obtain $\Gamma$\nobreakdash-convergence for families of Tikhonov functionals for nonlinear, continuous (maybe ill-posed) operator equations in Banach spaces and convergence of their corresponding minimal sequences. The topologies addressed in this article for convergence and continuity are the norm, weak and weak$^*$ topologies. The results are of importance for practical applications, where an exact evaluation of the original Tikhonov functional is not possible, since, e.g., the forward operator can not be exactly evaluated or forward solutions and / or measurement data are represented in finite dimensional subspaces. Then our results guarantee that, under mild conditions, the minimizer of the approximated functional is close to a minimizer of the original functional.


\bibliography{NGCTFNLIP_ref} 

\begin{thebibliography}{10}

\bibitem{adams2003sobolev}
R.~Adams and J.~J. Fournier.
\newblock {\em Sobolev spaces}.
\newblock Elsevier, 2003.

\bibitem{AMBROSIO1990TORTORELLI}
L.~Ambrosio and V.~M. Tortorelli.
\newblock Approximation of functional depending on jumps by elliptic functional
  via {$\Gamma$}-convergence.
\newblock {\em Communications on Pure and Applied Mathematics},
  43(8):999--1036, 1990.

\bibitem{aubin1967behavior}
J.~Aubin.
\newblock Behavior of the error of the approximate solutions of boundary value
  problems for linear elliptic operators by {G}alerkin's and finite difference
  methods.
\newblock {\em Annali della Scuola Normale Superiore di Pisa-Classe di
  Scienze}, 21(4):599--637, 1967.

\bibitem{braess2013finite}
D.~Braess.
\newblock {\em Finite {E}lemente: {T}heorie, schnelle {L}{\"o}ser und
  {A}nwendungen in der {E}lastizit{\"a}tstheorie}.
\newblock Springer, 2013.

\bibitem{Braides2005}
A.~Braides.
\newblock {\em Gamma-convergence for Beginners}.
\newblock Oxford University Press, New York, 2005.

\bibitem{burger2021variational}
M.~Burger.
\newblock Variational {R}egularization in {I}nverse {P}roblems and {M}achine
  {L}earning.
\newblock arXiv:2112.04591, 2021.

\bibitem{Conway90}
J.~B. Conway.
\newblock {\em A Course in Functional Analysis}, volume~96 of {\em Graduate
  Texts in Mathematics}.
\newblock Springer-Verlag, New York, second edition, 1990.

\bibitem{DalMaso1993}
G.~Dal~Maso.
\newblock {\em An Introduction to $\mathit{\Gamma}$-Convergence}.
\newblock Springer New York, New York, 1993.

\bibitem{engl}
H.~W. Engl, M.~Hanke, and A.~Neubauer.
\newblock {\em {R}egularization of {I}nverse {P}roblems}.
\newblock Mathematics and Its Applications. Kluwer Academic Publishers,
  Dordrecht, 1996.

\bibitem{evans2010partial}
L.~C. Evans.
\newblock {\em {Partial Differential Equations}}, volume~19 of {\em Graduate
  Studies in Mathematics}.
\newblock American Mathematical Soc., 2010.

\bibitem{Franklin1965}
S.~P. Franklin.
\newblock Spaces in which sequences suffice.
\newblock {\em Fundamenta Mathematicae}, 57(1):107--115, 1965.

\bibitem{groetsch1984theory}
C.~Groetsch.
\newblock {\em The theory of Tikhonov regularization for Fredholm equations}.
\newblock Pitman Publishing Limited, Boston, 1984.

\bibitem{louisbook}
A.~K. Louis.
\newblock {\em {I}nverse und schlecht gestellte {P}robleme}.
\newblock Teubner, Stuttgart, 1989.

\bibitem{Megginson1998}
R.~E. Megginson.
\newblock {\em An Introduction to Banach Space Theory}.
\newblock Graduate Texts in Mathematics. Springer-Verlag New York, 1998.

\bibitem{nitsche1968kriterium}
J.~Nitsche.
\newblock Ein {K}riterium f{\"u}r die {Q}uasi-{O}ptimalit{\"a}t des {R}itzschen
  {V}erfahrens.
\newblock {\em Numerische Mathematik}, 11(4):346--348, 1968.

\bibitem{PLATO;VAINIKKO:90}
R.~Plato and G.~Vainikko.
\newblock On the {R}egularization of {P}rojection {M}ethods for {S}olving
  {I}ll-{P}osed {P}roblems.
\newblock {\em Numer. Math.}, 57:63--79, 1990.

\bibitem{poschl2010discretization}
C.~P{\"o}schl, E.~Resmerita, and O.~Scherzer.
\newblock Discretization of variational regularization in {B}anach spaces.
\newblock {\em Inverse Problems}, 26(10), 2010.
\newblock ID 105017.

\bibitem{RIEDER:03}
A.~Rieder.
\newblock {\em {K}eine {P}robleme mit inversen {P}roblemen (No {P}roblems with
  inverse {P}roblems)}.
\newblock Vieweg, Wiesbaden, 2003.

\bibitem{Rudin91}
W.~Rudin.
\newblock {\em Functional Analysis}.
\newblock International Series in Pure and Applied Mathematics. McGraw-Hill
  Inc., New York, second edition, 1991.

\bibitem{schuster2012regularization}
T.~Schuster, B.~Kaltenbacher, B.~Hofmann, and K.~Kazimierski.
\newblock {\em Regularization Methods in Banach Spaces}.
\newblock de Gruyter, 2012.

\bibitem{VIERUS;SCHUSTER:22}
L.~Vierus and T.~Schuster.
\newblock Well-defined forward operators in dynamic diffractive tensor
  tomography using viscosity solutions of transport equations.
\newblock {\em Electron. Trans. Numer. Anal.}, 57:80--100, 2022.

\end{thebibliography}

\end{document}